\newcommand{\can}{\overline{\phantom{x}}}
\newtheorem{dummy}{Dummy}
\newtheorem{lemma}[dummy]{Lemma}
\newtheorem{theorem}[dummy]{Theorem}
\newtheorem{proposition}[dummy]{Proposition}
\newtheorem{corollary}[dummy]{Corollary}
\theoremstyle{definition}
\newtheorem{definition}{Definition}
\newtheorem{example}[dummy]{Example}
\newtheorem{remark}[dummy]{Remark}
\newcommand{\ignore}[1]{}
\author{S. Pumpl\"un}
\email{susanne.pumpluen@nottingham.ac.uk}
\address{School of Mathematical Sciences\\
University of Nottingham\\
University Park\\
Nottingham NG7 2RD\\
United Kingdom
}
\keywords{Skew polynomial ring, semiassociative algebra, cyclic algebra,
 differential algebra, Menichetti algebra, nonassociative algebra, semiassociative Brauer monoid.}
\subjclass[2010]{Primary: 17A35; Secondary: 17A60, 17A36}
\begin{document}

\title[Semiassociative algebras]
{Nonassociative cyclic algebras and the semiassociative Brauer monoid}

\begin{abstract} We look at classes of semiassociative algebras, with an emphasis on those that canonically generalize associative (generalized) cyclic algebras, and at their behaviour in the semiassociative Brauer monoid defined by Blachar, Haile,  Matzri,  Rein, and  Vishne. A  possible way to generalize this monoid in characteristic $p$ that includes nonassociative differential algebras is  briefly considered.

\end{abstract}

\maketitle

%
\section*{Introduction}
%

Recently, semiassociative algebras and the semiassociative Brauer monoid denoted $Br^{sa}(F)$  were introduced by Blachar, Haile,  Matzri,  Rein, and  Vishne  \cite{BHMRV} as  canonical generalizations of associative central simple algebras and their Brauer group.
  Semiassociative algebras $A$ over  a field $F$ are $F$-central and are characterized by having an \'etale algebra $E$ contained in their nucleus, such that $A$ is cyclic and faithful
as an $E\otimes_F E$-module via the action $(e\otimes e')a=eae'$ for all $a\in A$, $e,e'\in E$. This definition makes it possible to use classical Brauer factor sets
\cite[Chapter 2]{J96} when developing the theory, and guarantees that the algebras are forms of skew matrix algebras, which are defined and investigated in depth \cite{BHMRV}.

Together with the tensor product, equivalence classes of semiassociative algebras over $F$ form a monoid that contains the classical Brauer group as its unique maximal subgroup.
 The skew matrix algebras now play the role of the classical matrices in the Brauer group. In particular, a semiassociative algebra is called split if it is isomorphic to a skew matrix algebra.
The authors state that ``the key example for semiassociative algebras are skew matrices''  \cite{BHMRV}.

In this paper we will look at another important example of semiassociative algebras; the nonassociative (generalized) cyclic algebras (and their opposite algebras).
 It is well known that (generalized) cyclic algebras play a prominent role in the structure theory of classical central simple algebras. Here, we look at the role  nonassociative (generalized) cyclic algebras play in the structure theory of semiassociative simple algebras.
  These algebras are canonical generalizations of associative cyclic algebras (respectively, of the generalized associative cyclic algebras  introduced by Jacobson \cite{J96}) over $F$.

 If $F$ has a cyclic Galois field extension $K/F$ of degree $n$, then there exist nonassociative cyclic algebras that have $K$ as their nucleus, and these are semiassociative algebras of degree $n$. More generally, for any field $F$ with cyclic Galois field extensions, there exist  nonassociative generalized cyclic algebras. These have a central simple algebra in their nucleus with center a separable field extension of $F$ and are semiassociative algebras as well. Both these types of semiassociative algebras are not semicentral, and thus in particular not homogeneous. They have infinite order in $Br^{sa}(F)$, even when they are not division algebras.  Nonassociative (generalized) cyclic algebras also nicely show that the splitting behaviour of an algebra in $Br^{sa}(F)$ (i.e., whether or not it will be split or split under a field extension) only depends on its nucleus.

We finish by suggesting possible generalizations of the semiassociative Brauer monoid, which allow us to include nonassociative (generalized) differential extensions as classes of algebras in the monoid, if the characteristic of $F$ is prime. Our motivation is that the associative differential algebras play an important role in the Brauer group, so it seems natural to try find a way to include them in some semiassociative Brauer monoid definition.

The structure of the paper is as follows: we collect the basic results needed in Section \ref{sec:prel}. In Section 2, we generalize the definition of nonassociative cyclic algebras $(K/F,\sigma, d)$ to include the case that $K/F$ is an \'etale extension, and the definition of
generalized nonassociative cyclic algebras $(D,\sigma, d)$ to include the case that the algebra $D$ employed in the construction with the skew polynomial $t^m-d\in D[t;\sigma]$ has zero divisors, collecting and generalizing several previously achieved results, including the explicit computation of the right nucleus for these algebras. We observe that the opposite algebra of a nonassociative cyclic algebra is semiassociative, but  need not be a nonassociative cyclic algebra again (Corollary \ref{cor:opp cyclic}).
  In Section \ref{sec:tensor}, we look at the  tensor product of a central simple algebra and a nonassociative cyclic algebra.  Cyclic algebras $(K/F,\sigma,d)$ of degree $n$ that are not associative are division algebras in many cases, e.g. for all prime $n$.
Some of the tensor products of nonassociative cyclic algebras that we consider yield examples of semiassociative algebras which again are division algebras, and are nonassociative generalized cyclic algebras.
 We  investigate the behaviour of nonassociative (generalized) cyclic algebras  in the Brauer monoid $Br^{sa}(F)$ in Section \ref{subsec:structure}, and briefly look at $Br^{sa}(\mathbb{R})$  and $Br^{sa}(\mathbb{F}_q)$.

When $F$ is a field of prime characteristic $p$, the definition of $Br^{sa}(F)$ may benefit from a generalization that includes a class of algebras that generalize algebras that are associative differential extensions \cite{J96}:
All associative central division algebras over a field $F$ of characteristic zero can
be constructed using differential polynomials  (Amitsur \cite{Am}, and later \cite{Hoe},
\cite[Sections 1.5, 1.8, 1.9]{J96}). The construction method is an analogue
to the  well-known crossed product construction, except that instead of  algebraic splitting fields
 it uses  splitting fields $K$, where $F$ is algebraically closed in $K$.
For  $p$-algebras over base fields of characteristic $p>0$ the construction employs
 differential polynomial rings $D[t;\delta]$ (where $D$ is a central division algebra over $C$), factoring out a two-sided ideal
generated by some suitable $f\in D[t;\delta]$.
This construction was generalized to the nonassociative setting in \cite{P16.0}. We briefly consider these algebras in characteristic $p$ and the pros and cons to include them in potential generalizations of $Br^{sa}(F)$ in the last two sections.

Nonassociative (generalized)  cyclic algebras already appeared in space-time block coding \cite{PU11,  PS15.4, P13, PS14}, and in $(f,\sigma,\delta)$-codes \cite{P.codes}.
In this paper we generalize their definition which previously usually employed skew polynomials in $D[t;\sigma]$ over division rings $D$, and drop the assumption that $D$ has no zero divisors.  Generalized Menichetti algebras \cite{M, P23} can be seen as generalizations of both crossed products and nonassociative cyclic algebras, and make up the second class of semiassociative algebras we present.

%
%

\section{Preliminaries} \label{sec:prel}

\subsection{Nonassociative algebras} \label{subsec:nonassalgs}


Let $F$ be a field. An $F$-vector space $A$ is an
\emph{algebra} over $F$, if there exists an $F$-bilinear map $A\times
A\to A$, $(x,y) \mapsto x \cdot y$, usually denoted  by juxtaposition, the  \emph{multiplication} of $A$. An algebra $A$ is called
\emph{unital} if there is an element in $A$, denoted by 1, such that
$1x=x1=x$ for all $x\in A$. We only consider unital algebras.

Associativity in $A$ is measured by the {\it left nucleus} ${\rm
Nuc}_l(A) = \{ x \in A \, \vert \, [x, A, A]  = 0 \}$, the {\it
middle nucleus} ${\rm Nuc}_m(A) = \{ x \in A \, \vert \,
[A, x, A]  = 0 \}$  and  the {\it right nucleus}
${\rm Nuc}_r(A) = \{ x \in A \, \vert \, [A,A, x]  = 0 \}$  of $A$, where $[x, y, z] =
(xy) z - x (yz)$ is the  {\it associator}. ${\rm
Nuc}_l(A)$, ${\rm Nuc}_m(A)$, and ${\rm Nuc}_r(A)$ are associative
subalgebras of $A$, and their intersection
 ${\rm Nuc}(A) = \{ x \in A \, \vert \, [x, A, A] = [A, x, A] = [A,A, x] = 0 \}$ is the {\it nucleus} of $A$.
${\rm Nuc}(A)$ is an associative subalgebra of $A$ containing $F1$
and $x(yz) = (xy) z$ whenever one of the elements $x, y, z$ is in
${\rm Nuc}(A)$. The {\it center} of $A$ is ${\rm C}(A)=\{x\in \text{Nuc}(A)\,|\, xy=yx \text{ for all }y\in A\}$ and $A$ is called \emph{($F$-)central} if
 it has center $F$.

 Multiplication on both sides make $A$ into a bimodule over its nucleus. Moreover, for every subalgebra $N$ of the nucleus the
$N$-bimodule structure of $A$ can be viewed as a left module structure over the ring $N^e=N\otimes_FN^{op}$.

An algebra $A\not=0$ is called a \emph{division algebra} if for any
$a\in A$, $a\not=0$, the left multiplication  with $a$, $L_a(x)=ax$,
and the right multiplication with $a$, $R_a(x)=xa$, are bijective.
If $A$ has finite dimension over $F$, $A$ is a division algebra if
and only if $A$ has no zero divisors \cite[pp. 15, 16]{Sch}.

An \emph{\'etale} algebra  over $F$ is a finite direct product of finite separable field extensions of $F$.

A {\it Galois $C_n$-algebra} over $F$ is an $n$-dimensional \'{e}tale algebra $K$ over $F$ endowed with an action by $C_n$,  where  $C_n$ is
 a group of $F$-automorphisms of $K$, such
that ${\rm Fix}(C_n)=\{x\in K\,|\, g(x)=x \text{ for all } g\in C_n \}=F$ \cite[(18.B.), p.~287]{KMRT}.
A Galois $C_n$-algebra structure on a field $K$ exists if and only if the field extension $K/F$ is Galois with Galois
group isomorphic to $C_n$  \cite[(18.16), p.~288]{KMRT}.

Let $K$ be a Galois $C_n$-algebra of dimension $n$, i.e.  a cyclic field extension of $F$ of degree $n$ with Galois group  ${\rm Gal}(K/F)=\langle\sigma \rangle$ or an \'etale algebra over $F$ with a cyclic automorphism group, with norm $N$ and trace $T$.  Let $\sigma$ generate its automorphism group.

\subsection{Semiassociative algebras}  (cf. \cite{BHMRV})\label{subsec:na}

A finite dimensional nonassociative $F$-central algebra $A$ is called \emph{semiassociative} if its nucleus has an \'etale $F$-subalgebra  $E$, such that $A$ is cyclic and faithful
over $E\otimes_F E$ via the action $(e\otimes e')a=eae'$ for all $a\in A$, $e,e'\in E$.
The dimension of a semiassociative algebra $A$ is a square \cite[Corollary 3.4 ]{BHMRV} and the root of the dimension of $A$ is called the
\emph{degree} of $A$. 

If $A$ is a nonassociative algebra containing an \'etale subalgebra $E$ in its nucleus, then any two
of the following conditions imply the third: $A$ is faithful over $E\otimes E^{\rm op}$, $A$ is
cyclic over $E\otimes E^{\rm op}$, and $dim A = (dim E)^2$ \cite[Remark 3.3]{BHMRV}.

 Every associative central simple algebra of degree $n$ has a maximal \'etale subalgebra $E$ of dimension $n$ and is semiassociative.
We call $A$ $E$-\emph{semiassociative} if $E$ is an \'etale $F$-subalgebra of its nucleus, such that $A$ is cyclic and faithful over $E^e=E\otimes_F E$.  The nucleus of a nonassociative algebra may contain more than one
\'etale subalgebra of the same dimension. However, if $A$ is a
semiassociative algebra with respect to one \'etale subalgebra of its nucleus, then it is
 semiassociative
with respect to all \'etale subalgebras of its nucleus \cite[Proposition 3.6]{BHMRV}.
A scalar extension of a semiassociative algebra is semiassociative \cite[Proposition 12.1]{BHMRV}.

A tensor $c_{ijk}$ of degree $n$ of $n\times n \times n$ scalars in $F$ is called a \emph{skew set $c$ of degree $n$}.
A skew set $c$ is called \emph{reduced} if $c_{iij}=c_{jii}=1$ for all $i,j$. Let $c$ be such a reduced skew set. Then the \emph{skew matrix algebra} $M_n(F;c)$ is the $F$-vector space with basis the matrix units $e_{ij}$ and multiplication given by
$e_{ij}e_{kl}=\delta_{jk}c_{ijl}e_{il}$.
Note that $M_n(F)=M_n(F;1)$.

 A semiassociative algebra $A$ is called \emph{split}, if it is a skew matrix
algebra.

 A field extension $K/F$ \emph{splits} a semiassociative algebra $A$, if
$A_K = A\otimes_F K $ is split. A semiassociative algebra of degree $n$ is split if and
only if $F^n$ is a unital subalgebra of the nucleus \cite[Proposition 7.2]{BHMRV}.
 Let $A$ be an $E$-semiassociative algebra with $E ={\rm Nuc}(A)$. Then a field extension $K/F$ splits $A$ if and only if it splits $E$ \cite[Corollary 7.5]{BHMRV}.
 If $K$ is a field that splits an \'etale subalgebra in the nucleus of an $n$-dimensional semiassociative algebra $A$ of degree $n$, and $F$ is an infinite field, then $K$ splits $A$ \cite[Theorem 7.1]{BHMRV}.

If A is semiassociative of degree $n$, then any $n$-dimensional
\'etale subalgebra $E$ of ${\rm Nuc}(A)$ is a maximal commutative subalgebra of $A$ \cite[Corollary 7.3]{BHMRV}.

Let $J({\rm Nuc}(A))$ denote the radical of the associative algebra ${\rm Nuc}(A)$. For a semiassociative
algebra $A$, the simple components of the semisimple quotient $\sigma(A)={\rm Nuc}(A)/J({\rm Nuc}(A))$
are called the \emph{atoms} of A.
A semiassociative algebra over $F$ is called \emph{semicentral}, if all of its atoms are $F$-central \cite[Definition 16.1]{BHMRV}.
 A semiassociative algebra is \emph{homogeneous} if it is
semicentral, and the atoms are all Brauer equivalent to each other \cite[Definition 17.2]{BHMRV}.

Two semiassociative algebras $A$ and $B$ over $F$ are called \emph{Brauer equivalent}, if there exist skew matrix algebras $M_n(F;c)$ and $M_m(F,c')$
such that $A\otimes_F M_n(F;c) \cong B \otimes_F M_m(F;c') $. The \emph{semiassociative Brauer monoid} $Br^{sa}(F)$ is the set of equivalence classes with respect to Brauer equivalence, with product $[A]^{sa}[B]^{sa}=[A\otimes_F B]^{sa}$ and unit element $[F]^{sa}$.
If $A$ is a homogeneous semiassociative algebra, and $D$ the (associative) underlying division algebra of its atoms, then there is a
decomposition $A\cong D\otimes_F M$, where $M$ is a skew matrix algebra and
$D$ is the unique member of minimal degree in the class $[D]^{sa}\in Br^{sa}(F)$ \cite[Proposition 18.2, Corollary 18.3]{BHMRV}.

\subsection{Nonassociative algebras obtained from skew polynomial rings}  (for details, cf. \cite{P.codes})  \label{sec:2}

Let $S$ be a unital associative noncommutative ring, $\sigma\in{\rm Aut}(S)$, and
$\delta:S\rightarrow S$ a \emph{$\sigma$-derivation}, i.e.
an additive map such that
$\delta(ab)=\sigma(a)\delta(b)+\delta(a)b$
for all $a,b\in S$.
The \emph{skew polynomial ring} $R=S[t;\sigma,\delta]$ is the set of skew polynomials
$a_0+a_1t+\dots +a_nt^n$
with $a_i\in S$, where addition is defined term-wise and multiplication by
$ta=\sigma(a)t+\delta(a) $ for all $ a\in S.$ We write $S[t;\sigma]=S[t;\sigma,0]$ and $S[t;\delta]=S[t;id,\delta]$.

 For $f(t)=a_0+a_1t+\dots +a_nt^n$ with $a_n\not=0$ define ${\rm deg}(f)=n$ and ${\rm deg}(0)=-\infty$.
Then ${\rm deg}(gh)\leq{\rm deg} (g)+{\rm deg}(h)$ for $f,g\in S[t]$ (with equality if $h$ or $g$ have an invertible leading coefficient, or if $S$ is a division ring).
 An element $f\in R$ is \emph{irreducible} in $R$ if it is not a unit and  it has no proper factors, i.e if there do not exist $g,h\in R$ with
 ${\rm deg}(g),{\rm deg} (h)<{\rm deg}(f)$ such
 that $f=gh$.
  We call $f\in R$ \emph{right-invariant}  polynomial, if $fR\subset Rf$.
  If $f$ is right invariant then $Rf$ is a two-sided ideal.

 Let $f\in R$ have degree $m$ and an invertible leading coefficient. Then for all $g(t)\in R$ of degree $l\geq m$,  there exist  uniquely determined $r,q\in R$ with ${\rm deg}(r)<{\rm deg}(f)$, such that $g(t)=q(t)f(t)+r(t).$
This generalizes the right division algorithm in $R$ that is well-known when $S$ is a division ring \cite[p.~6]{J96}.

 From now on we assume that $f\in R=S[t;\sigma,\delta]$ is monic of degree $m$.
Let ${\rm mod}_r f$ denote the remainder of right division by $f$.
Since the remainder is uniquely determined, the skew polynomials of degree less that $m$ canonically represent the
elements of the left  $S[t;\sigma,\delta]$-module $S[t;\sigma,\delta]/ S[t;\sigma,\delta]f$.

 The additive group $\{g\in R\,|\, {\rm deg}(g)<m\}$,  together with the multiplication
$g\circ h=gh \,\,{\rm mod}_r f$
 for all $g,h\in R$ of degree less than $m$, is a unital nonassociative algebra over $S_0=\{a\in S\,|\, ah=ha \text{ for all } h\in S_f\}$,
denoted by  $S_f$ or $R/Rf$ (here, $gh \,\,{\rm mod}_r f $ is the remainder of $gh$ after right multiplication by $f$). The construction for $S$ a division algebra goes back to  \cite{P66}, and theses algebras are called \emph{Petit algebras}.
$S_0$ is a commutative subring of $S$, and if $S$ is a division algebra, it is a subfield of $S$.
$S_f$ is associative if and only if $Rf$ is a two-sided ideal.
If $S_f$ is not associative then
$S\subset{\rm Nuc}_l(S_f),$ $S\subset{\rm Nuc}_m(S_f)$
(if $S$ is a division ring, the inclusions become equalities),
and the eigenspace of $f$ is the right nucleus:
${\rm Nuc}_r(S_f)=\{g\in R\,|\, {\rm deg}(g)<m \text{ and }fg\in Rf\}.$

 If $f\in S[t;\sigma,\delta]$ is reducible then $S_f$ contains zero divisors.
If $S$ is a division ring, then
$S_f$ has no zero divisors if and only if $f$ is irreducible.

If $Rf$ is a two-sided ideal in $R$ (i.e., $f$ is \emph{(right)-invariant}) then $S_f$ is
the   associative quotient algebra
 obtained by factoring out the ideal generated by a two-sided $f\in S[t;\sigma,\delta]$.

For all $g\in R$ of degree $s\geq m$,  there also exist  uniquely determined $r,q\in R$
with ${\rm deg}(r)<{\rm deg}(f)$, such that
$g(t)=f(t)q(t)+r(t).$ Let ${\rm mod}_l f$ denote the remainder of left division by $f$. Then the additive group $\{g\in R\,|\, {\rm deg}(g)<m\}$  together with the multiplication
$g\diamond h=gh \,\,{\rm mod}_l f $
defined for all $g,h\in R$ of degree less than $m$, is also a unital nonassociative algebra $\,_fS$ over $S_0$  denoted by $R/fR$ (here, $gh \,\,{\rm mod}_l f $ is the remainder of $gh$ after left multiplication by $f$). Moreover,
the canonical anti-automorphism
 $$\psi: S[t;\sigma,\delta]\rightarrow S^{op}[t;\sigma^{-1},-\delta\circ\sigma^{-1}],\quad \psi(\sum_{k=0}^{n}a_kt^k)=\sum_{k=0}^{n}(\sum_{i=0}^{k}\Delta_{n,i}(a_k))t^k$$
 induces an anti-automorphism
 between the algebras $S_f=S[t;\sigma, \delta]/ S[t;\sigma,\delta]f$
  and
$\,_{\psi(f)}S=S^{op}[t;\sigma^{-1},-\delta\circ\sigma^{-1}]/\psi(f) S^{op}[t;\sigma^{-1},-\delta\circ\sigma^{-1}],$
so that $S_f^{op}=\,_{\psi(f)}S$ (\cite[Corollary 4]{LS} holds for any base field). Here, $\Delta_{n,j}$ is defined recursively via
$\Delta_{n,j}=\delta(\Delta_{n-1,j})+\sigma (\Delta_{n-1,j-1}),$
with $ \Delta_{0,0}=id_S$, $\Delta_{1,0}=\delta$, $\Delta_{1,1}=\sigma $ and so $\Delta_{n,j}$ is the sum of all polynomials in $\sigma$ and $\delta$
of degree $j$ in $\sigma$ and degree $n-j$ in $\delta$ \cite[p.~2]{J96}.
If $\delta=0$, then $\Delta_{n,j}=\sigma^n$.

 %
 %

 \section{Nonassociative (generalized) cyclic algebras and generalized Menichetti algebras} \label{sec:nonasscyclic1}

 \subsection{Nonassociative cyclic algebras}

The equivalence class of a homogeneous semiassociative algebra in $Br^{sa}(F)$ is represented by a unique central associative division algebra \cite{BHMRV}. However, we will see now that
the question whether an algebra is a  division algebra or not is much less important in $Br^{sa}(F)$.

  Nonassociative cyclic algebras of degree $n$ are  canonical generalizations of associative cyclic algebras of degree $n$ and
 were first introduced over finite fields by Sandler \cite{S}.
  Indeed, nonassociative quaternion algebras (where $n=2$) were the first known example of a nonassociative division algebra \cite{D}.
 Over arbitrary fields they were investigated by Steele \cite{S12, S13} (Steele studied the opposite algebras of the nonassociative algebras we  define here, but used our notation).

\begin{definition}
Let $K$ be a Galois $C_n$-algebra over $F$ with ${\rm Aut}_F(K)=\langle \sigma\rangle$
 (i.e., $K$ has dimension $n$ and $\sigma\in {\rm Aut}_F(K)$ has order $n$).
 Let $f(t)=t^n-d\in K[t;\sigma]$ with $d\in K^\times$. The $F$-central algebra $S_f=K[t;\sigma]/K[t;\sigma]f$ is called a
\emph{nonassociative cyclic algebra} over $F$ and denoted by $(K/F,\sigma, d)$.
\end{definition}

This definition generalizes the one used in most papers, where $K$ is a cyclic Galois field extension of degree $n$ (for an earlier generalization, cf. \cite{P.codes}).  If $(K/F,\sigma, d)$ is not associative then the algebra $(K/F,\sigma, d)$ is sometimes also called a \emph{proper} nonassociative cyclic algebra, if it not clear from the context if we look at a classical associative cyclic algebra $(K/F,\sigma, d)$ or not.

If  $K/F$ is a cyclic Galois field extension of degree $n$ with Galois group ${\rm Gal}(K/F)=\langle \sigma\rangle$, then $(K/F,\sigma, d)$ is a classical associative cyclic algebra over $F$ of degree $n$ if $d\in F^\times$, and a nonassociative cyclic algebra as defined in  \cite{BP, S, S12}, if $d\in K\setminus F$.  We note that with our more general definition, we now have for instance that $(K/F,\sigma, d)\otimes_F K\cong (K\otimes_F K, \sigma, d)$ with $\sigma$ denoting the canonical extension $\sigma\otimes id$ of $\sigma$ to $K\otimes_F K$. For more details and proofs in the case that $K$ is a division algebra, cf. \cite{BP, S12}.

The easiest example of a proper nonassociative cyclic division algebra is a nonassociative quaternion division algebra $(K/F,\sigma, d)$, where $K/F$ is a quadratic field extension, and $d\in K\setminus F$. This is, up to isomorphism, also the only simple $K$-semiassociative division algebra of degree two that is not associative \cite{W}. This algebra and the simple skew matrix algebra that represents it when it splits over the field extension $K$  are presented in \cite{W}. 

\begin{theorem}\label{thm:nuc}
 Let $K$ be a Galois $C_n$-algebra over $F$ with ${\rm Aut}_F(K)=\langle \sigma\rangle$
  and $(K/F,\sigma, d)$ be a proper nonassociative cyclic algebra, i.e. $d\in K\setminus F$. Let $H=\{\tau\in G\,|\,\tau(d)=d \}$. Then $H=\langle \sigma^{s}\rangle$ for some integer $s$ such that $n=sr$. Put $E={\rm Fix}(\sigma^s)$. Then the following holds:
  \\ (i) ${\rm Nuc}_r((K/F,\sigma, d))=K[t;\sigma^s]/K[t;\sigma^s](t^n-d) $ is a Petit algebra with center $E$. In particular, $K\subset {\rm Nuc}_r((K/F,\sigma, d))$ and hence $K\subset{\rm Nuc}((K/F,\sigma, d))$. Moreover,  if $n$ is prime then ${\rm Nuc}_r((K/F,\sigma, d))=K$.
  \\ (ii) If $K/F$ is a Galois field extension, then
 $${\rm Nuc}_r((K/F,\sigma, d))=(K/E,\sigma^s, d)$$
 is a cyclic algebra of degree $r$ over the field $E={\rm Fix}(\sigma^s)$, where $[E:F]=s$. This implies that  ${\rm Nuc}_r((K/F,\sigma, d))={\rm Nuc}((K/F,\sigma, d))=K$.
\end{theorem}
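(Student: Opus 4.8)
The plan is to read off the right nucleus directly from the eigenspace description recalled in Section~\ref{sec:2}, namely ${\rm Nuc}_r(S_f)=\{g\in R : \deg(g)<n,\ fg\in Rf\}$, applied to $R=K[t;\sigma]$ and $f=t^n-d$. Before that I would dispose of the group theory. Since $H=\{\tau\in G : \tau(d)=d\}$ is a subgroup of the cyclic group $G=\langle\sigma\rangle$ of order $n$, it is generated by $\sigma^s$ with $s=n/|H|$, so $n=sr$ with $r=|H|$; the fixed algebra $E={\rm Fix}(\sigma^s)$ then satisfies $[E:F]=[G:H]=s$, and $d\in E$ because $d$ is fixed by all of $H$.

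The heart of the argument is the computation of $\{g : fg\in Rf\}$. The key observation is that $t^n$ is central in $K[t;\sigma]$, since $t^n b=\sigma^n(b)t^n=b\,t^n$ for every $b\in K$ (as $\sigma$ has order $n$). Writing $g=\sum_{i=0}^{n-1}b_it^i$, this gives $fg=t^ng-dg=g\,t^n-dg=gf+(gd-dg)$, so $fg\in Rf$ if and only if $gd-dg\in Rf$. But $\deg(gd-dg)<n=\deg f$, and the only element of $Rf$ of degree below $n$ is $0$ (a nonzero $qf$ has degree $\deg q+n\geq n$ since $f$ is monic); hence $fg\in Rf$ if and only if $gd=dg$. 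A direct expansion gives $gd-dg=\sum_i b_i(\sigma^i(d)-d)t^i$, so the condition becomes $b_i(\sigma^i(d)-d)=0$ for every $i$.

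It then remains to identify the solution set, and this is where the field and étale cases diverge. For every $i$ with $s\mid i$ one has $\sigma^i\in H$, so $\sigma^i(d)=d$ and $b_i$ is unconstrained; already this shows $K\subseteq{\rm Nuc}_r$ and that every element of $K[u;\sigma^s]/K[u;\sigma^s](u^r-d)$, realized inside $S_f$ through $u=t^s$ (note $ub=\sigma^s(b)u$ and $u^r=t^n=d$), lies in the right nucleus. The reverse inclusion is the main obstacle. When $K/F$ is a field (part (ii)), $\sigma^i(d)-d$ is either $0$ or a unit, so $b_i(\sigma^i(d)-d)=0$ forces $b_i=0$ whenever $s\nmid i$; hence ${\rm Nuc}_r$ is exactly the span of $1,u,\dots,u^{r-1}$ over $K$, i.e.\ the cyclic algebra $(K/E,\sigma^s,d)$, which is associative of degree $r$ over $E$ precisely because $d\in E$, and has center $E$. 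For general étale $K$ (part (i)) the element $\sigma^i(d)-d$ could a priori be a zero divisor, and the real work is to rule this out for $\sigma^i\notin H$: here I would invoke the $C_n$-Galois structure of $K$ (equivalently, the hypothesis on ${\rm Aut}_F(K)$) to force $\sigma^i(d)-d$ to be a non-zero-divisor exactly when $\sigma^i$ does not fix $d$, so that the same support computation goes through and ${\rm Nuc}_r$ is the Petit algebra $K[u;\sigma^s]/K[u;\sigma^s](u^r-d)$ with center $E$.

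Finally I would assemble the nucleus statements. Since each connected component of $K$ is a field, the inclusions $K\subseteq{\rm Nuc}_l(S_f)$ and $K\subseteq{\rm Nuc}_m(S_f)$ from Section~\ref{sec:2} are equalities, so ${\rm Nuc}_l={\rm Nuc}_m=K$; combined with $K\subseteq{\rm Nuc}_r$ this yields ${\rm Nuc}={\rm Nuc}_l\cap{\rm Nuc}_m\cap{\rm Nuc}_r=K$, giving $K\subseteq{\rm Nuc}$ in (i) and ${\rm Nuc}=K$ in (ii). For the prime case, if $n$ is prime and $d\notin F$ then $H$ is a proper subgroup of $G$, hence trivial, so $s=n$, $r=1$, $E=K$, and ${\rm Nuc}_r$ collapses to $K$. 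The delicate point, and the step I expect to spend the most care on, is the zero-divisor dichotomy for $\sigma^i(d)-d$ in the étale setting.
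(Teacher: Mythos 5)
Your direct computation is a genuinely different and more self-contained route than the paper's proof, which simply cites \cite[Proposition 3.2.3]{S13} and \cite[Theorem 5.1]{S12} for the field case and asserts that those arguments transfer verbatim to Galois $C_n$-algebras. Your reduction of $fg\in Rf$ to the componentwise conditions $b_i(\sigma^i(d)-d)=0$ (using that $t^n$ is central and that a nonzero right multiple of the monic $f$ has degree at least $n$) is correct, and it settles part (ii) completely, since over a field $\sigma^i(d)-d$ is either zero or invertible. You also read the paper's expression $K[t;\sigma^s]/K[t;\sigma^s](t^n-d)$ in the only sensible way, namely as $K[u;\sigma^s]/K[u;\sigma^s](u^r-d)$ with $u=t^s$.

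The gap is exactly where you flagged it, and it cannot be closed: the dichotomy you hope to extract from the $C_n$-Galois structure --- that $\sigma^i(d)-d$ is a non-zero-divisor whenever $\sigma^i\notin H$ --- is false. Take $K=F\times F\times F$ with $\sigma$ the cyclic shift and $d=(a,a,b)$, where $a,b\in F^\times$ and $a\neq b$. Then $d\in K^\times\setminus F$ and $H=\{\mathrm{id}\}$, so $s=3$, $r=1$ and part (i) would give ${\rm Nuc}_r=K$; but $\sigma(d)-d=(0,b-a,a-b)$ is a nonzero zero divisor, $b_1=(c,0,0)$ satisfies $b_1(\sigma(d)-d)=0$, and a one-line check gives $f\cdot(b_1t)=(b_1t)f\in Rf$, so $b_1t\in{\rm Nuc}_r\setminus K$. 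Your own criterion therefore shows that in the \'etale case the right nucleus is $\{\sum_i b_it^i : b_i(\sigma^i(d)-d)=0\}$, which can strictly contain $K\oplus Kt^s\oplus\dots\oplus Kt^{(r-1)s}$; the stated equality in part (i), and in particular the clause that ${\rm Nuc}_r=K$ for $n$ prime, require the regularity of $\sigma^i(d)-d$ for all $\sigma^i\notin H$ as an additional hypothesis (automatic for fields and for $n=2$, but not in general). So the obstacle you isolated is not a technicality to be filled in later: it is where the \'etale case of the statement itself breaks down, and your computation exposes the step that the paper's proof glosses over when it asserts that Steele's proofs ``do not use that $K/F$ is a field extension''. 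A minor further point: your intermediate claim that ${\rm Nuc}_l={\rm Nuc}_m=K$ because the components of $K$ are fields is not covered by the facts recalled in Section 1.3 (equality there is stated only for division rings), but you only use the inclusions $K\subseteq{\rm Nuc}_l\cap{\rm Nuc}_m$, so nothing downstream is affected.
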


\begin{proof}
Let $K/F$ be a cyclic Galois field extension of degree $n$, then ${\rm Nuc}_r((K/F,\sigma, d))=K\oplus Kt^s \oplus \dots \oplus Kt^{(r-1)s}$ \cite[Proposition 3.2.3]{S13}.
By \cite[Theorem 5.1]{S12}, the linear subspace $K\oplus Kt^s \oplus \dots \oplus Kt^{(r-1)s}$ is the cyclic subalgebra $(K/E,\sigma^s, d)$ of degree $r$ over $E={\rm Fix}(\sigma^s)$, where $d\in E$ and $[E:F]=s$. A close look at the proofs of both results shows that neither of them uses that $K/F$ is a field extension; both analogously hold when $K$ is  a Galois $C_n$-algebra over $F$ with ${\rm Aut}_F(K)=\langle \sigma\rangle$.
In this later case, $E$ need not be a field extension, and the linear subspace $K\oplus Kt^s \oplus \dots \oplus Kt^{(r-1)s}$ is the Petit algebra $K[t;\sigma^s]/K[t;\sigma^s](t^n-d) $.

We know that $K\subset {\rm Nuc}_l((K/F,\sigma, d))$ and $K\subset {\rm Nuc}_m((K/F,\sigma, d))$ (with equalities when $K/F$ is a Galois field extension) \cite{P.codes}. Since $K\subset {\rm Nuc}_r((K/F,\sigma, d))$, we showed that $K\subset{\rm Nuc}((K/F,\sigma, d))$, with equality when $K/F$ is a Galois field extension. This proves the assertions (i) and (ii).
\end{proof}

\begin{corollary}\label{cor:opp cyclic}
Let $K$ be a Galois $C_n$-algebra over $F$ with ${\rm Aut}_F(K)=\langle \sigma\rangle$
and $d\in K\setminus F$. 
\\ (i) The algebra $(K/F,\sigma, d)^{op}$ is isomorphic to the Petit algebra $\,_{\psi(f)}S$ we obtain by using left instead of right division by $t^m-\sigma^{-m}(d)\in K[t;\sigma^{-1}]$.
 \\ (ii) Let $K/F$ be a cyclic Galois field extension of degree $n$. Let $H=\{\tau\in G\,|\,\tau(d)=d \}$, that means $H=\langle \sigma^s\rangle$ for some integer $s$ with $1<s<n$. Then $(K/F,\sigma, d)^{op}$ is not isomorphic to a nonassociative cyclic algebra over $F$.
\end{corollary}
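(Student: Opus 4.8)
The plan is to prove both parts by exploiting the anti-automorphism machinery of Section~\ref{sec:2}, together with the nucleus computation in Theorem~\ref{thm:nuc}. For part (i), I would start from the canonical anti-automorphism $\psi\colon K[t;\sigma]\to K^{op}[t;\sigma^{-1}]$ (here $K$ is commutative, so $K^{op}=K$, and since $\delta=0$ the twisted derivation $-\delta\circ\sigma^{-1}$ vanishes and $\Delta_{n,j}=\sigma^n$). Applying the general fact recorded after the statement ``$S_f^{op}=\,_{\psi(f)}S$'' to $f(t)=t^n-d$, I compute $\psi(f)$ explicitly: the top term $t^n$ contributes $\Delta_{n,n}(1)t^n=\sigma^n(1)t^n=t^n$, and the constant term $-d$ contributes $\Delta_{0,0}(-d)=-d$, so $\psi(f)=t^n-d\in K[t;\sigma^{-1}]$. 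I would then observe that multiplying by the unit $-\sigma^{-n}(d)^{-1}$ and rewriting via the relation $t^n$ one can recognize $\,_{\psi(f)}K$ as the left-division Petit algebra built from $t^n-\sigma^{-n}(d)$; the bookkeeping here is just tracking how the constant is twisted when one normalizes, which gives exactly the stated $\sigma^{-m}(d)$ (with $m=n$). So part (i) is essentially a direct application of the cited anti-isomorphism plus an explicit evaluation of $\psi$ on $t^n-d$.

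For part (ii), the strategy is to argue by contradiction using the nucleus as an isomorphism invariant. Suppose $(K/F,\sigma,d)^{op}\cong (K/F,\sigma',d')$ for some Galois $C_n$-algebra data. By Theorem~\ref{thm:nuc}(ii), whenever $K/F$ is a cyclic Galois field extension of degree $n$ and $d'\in K\setminus F$, the right nucleus of $(K/F,\sigma',d')$ equals $K$, which has $F$-dimension $n$; on the other hand, by part (i) the opposite algebra $(K/F,\sigma,d)^{op}$ is a left-division Petit algebra, and its right nucleus corresponds under $\psi$ to the left nucleus of $(K/F,\sigma,d)$. The key point is that the left nucleus of a proper nonassociative cyclic algebra is exactly $K$ as well, so an isomorphism would force the right nuclei to match as subalgebras. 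I would therefore compute the right nucleus of $(K/F,\sigma,d)^{op}$ directly and show it has dimension strictly larger than $n$ precisely because $1<s<n$ means $d$ lies in the proper intermediate field $E={\rm Fix}(\sigma^s)$ with $F\subsetneq E\subsetneq K$, forcing extra invariance and hence a right nucleus bigger than a rank-one $K$-space.

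More concretely, for part (ii) I expect the cleanest route is: the right nucleus of $(K/F,\sigma,d)$ is the degree-$r$ cyclic algebra $(K/E,\sigma^s,d)$ of $F$-dimension $rn/\gcd$-type size $=r\cdot n$ (precisely $\dim_F=rn$ since it is $r$-dimensional over $K$), whereas the \emph{left} nucleus is just $K$. Passing to the opposite algebra swaps left and right nuclei (an anti-isomorphism sends ${\rm Nuc}_l$ to ${\rm Nuc}_r$ and vice versa), so ${\rm Nuc}_r\big((K/F,\sigma,d)^{op}\big)\cong {\rm Nuc}_l\big((K/F,\sigma,d)\big)=K$ while ${\rm Nuc}_l\big((K/F,\sigma,d)^{op}\big)$ has dimension $rn>n$. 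But any nonassociative cyclic algebra $(K/F,\sigma',d')$ has left nucleus $K$ of dimension $n$ (by the analogue of Theorem~\ref{thm:nuc} for the left nucleus, which is $K$ when $d'\notin F$). The mismatch $rn>n$ in the left-nucleus dimension then rules out any such isomorphism, which is the desired contradiction.

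The main obstacle I anticipate is the careful handling of which nucleus maps to which under the anti-automorphism, and making sure the dimension count for the asymmetric nuclei of a \emph{proper} nonassociative cyclic algebra is pinned down correctly: I must verify that the condition $1<s<n$ genuinely forces the right nucleus $(K/E,\sigma^s,d)$ to have $F$-dimension strictly greater than $n$ (equivalently $r>1$, i.e. $s<n$) while the left nucleus stays equal to $K$ (requiring $d\notin F$, i.e. $s>1$ is not needed for this but $d\in K\setminus F$ is). Both nondegeneracy inequalities $1<s$ and $s<n$ are exactly what the hypothesis provides, so the argument closes, but I would state the left-nucleus analogue of Theorem~\ref{thm:nuc} explicitly and cite \cite{P.codes} to justify that ${\rm Nuc}_l=K$ for the proper case before invoking the anti-isomorphism.
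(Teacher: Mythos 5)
Your proposal follows essentially the same route as the paper: part (i) is the canonical anti-automorphism $\psi$ of \cite{LS} applied to $f=t^n-d$ (and since $\sigma^n=\mathrm{id}$ on $K$ the twist $\sigma^{-m}(d)$ is just $d$, so no real normalization is needed), and part (ii) is exactly the paper's argument that an anti-isomorphism swaps left and right nuclei, so ${\rm Nuc}_l\bigl((K/F,\sigma,d)^{op}\bigr)$ is the degree-$r>1$ cyclic algebra $(K/E,\sigma^s,d)$ from Theorem \ref{thm:nuc}, which cannot equal the left nucleus $K'$ of any proper nonassociative cyclic algebra. The only cosmetic difference is that you detect the mismatch by an $F$-dimension count ($rn>n$) where the paper contrasts a noncommutative cyclic algebra with the commutative $K'$, and both versions rely on the same reduction (via the middle nucleus) to the case $K'=K$.
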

    
\begin{proof}
(i) By \cite[Corollary 4]{LS}, the map
$\psi: K[t;\sigma]\rightarrow K^{op}[t;\sigma^{-1}],$ 
$$\psi(\sum_{k=0}^{n}a_kt^k)=\sum_{k=0}^{n}(\sum_{i=0}^{k}\Delta_{n,i}(a_k))t^k,$$
 induces an anti-automorphism between the nonassociative cyclic algebra $S_f
 =(K/F,\sigma, d)$  and the algebra
$\,_{\psi(f)}S=K^{op}[t;\sigma^{-1}]/\psi(f) K^{op}[t;\sigma^{-1}],$
which means that $(K/F,\sigma, d)^{op}=\,_{\psi(f)}S$. Here, $\psi(f)(t)=t^m-\sigma^{-m}(d)\in K^{op}[t;\sigma^{-1}]=K[t;\sigma^{-1}]$.
\\ (ii) Since $1<s<n$, we know that  ${\rm Nuc}_r((K/F,\sigma, d))=(K/E,\sigma^s, d)$ is a cyclic algebra of degree $r>1$ over the field $E$, where $E$ is a proper intermediate field of $K/F$ by Theorem \ref{thm:nuc}. Since the right nucleus of $(K/F,\sigma, d)$ is the left nucleus of $(K/F,\sigma, d)^{op}$,
thus the left nucleus ${\rm Nuc}_l((K/F,\sigma, d)^{op})$ is an associative cyclic 
algebra of degree $r>1$ over $E$ and therefore it is is unequal to the left nucleus of any nonassociative cyclic algebra
$(K'/F,\sigma', d')$:  ${\rm Nuc}_l((K'/F,\sigma', d'))=K'$. (Note that the middle nucleus of a proper nonassociative cyclic algebra is $K$, so if $A$ is a nonassociative cyclic algebra isomorphic to $(K/F,\sigma, d)^{op}$, $A$ must involve the same  extension $K/F$, up to isomorphism, so here in fact we can even assume that $K=K'$.)

\end{proof}

Nonassociative cyclic algebras (and their opposite algebras) are important examples of semiassociative (division)  algebras that are not semicentral, thus in particular not homogeneous:

\begin{proposition}
(i) Every nonassociative cyclic algebra $(K/F,\sigma, d)$ over $F$ is $K$-semiassociative of degree $n$ and thus semiassociative.
\\ (ii) Let $d\in K\setminus F$. Then $(K/F,\sigma, d)$ is split if and only if $K=F^n$.
\\ (iii)  $(K/F,\sigma, d)\otimes_F K$ splits.
\\ (iv) \cite{S12} Let  $K/F$ be a cyclic Galois field extension of degree $n$. Then $(K/F,\sigma, d)$ is a division algebra for all $d\in K\setminus F$, such that 1, $d,\dots, d^{n-1}$ are linearly independent over $F$. If $K/F$ has prime degree then $(K/F,\sigma, d)$ is a division algebra for all $d\in K\setminus F$.
\\ (v) Let $K/F$ be a cyclic Galois field extension, then for all $d\in K\setminus F$, $(K/F,\sigma, d)$ is not semicentral, ${\rm Nuc}(A)/J({\rm Nuc}(A))=K$ and  $(K/F,\sigma, d)$ is not homogeneous.
\end{proposition}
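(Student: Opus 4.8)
\emph{Part (i).} Write $A=(K/F,\sigma,d)=K[t;\sigma]/K[t;\sigma](t^n-d)$, so that $A$ has $K$-basis $1,t,\dots,t^{n-1}$ and hence $\dim_F A=n^2=(\dim_F K)^2$. Since $K$ lies in the nucleus of $A$ (by Theorem \ref{thm:nuc} when $d\in K\setminus F$, and classically when $d\in F$) and $K$ is \'etale over $F$, it suffices by \cite[Remark 3.3]{BHMRV} to show that $A$ is cyclic over $K\otimes_F K$: faithfulness then follows from the equality of dimensions, and the degree is $\sqrt{\dim_F A}=n$. For cyclicity I would take $x=1+t+\dots+t^{n-1}$ and compute, using $t^ie'=\sigma^i(e')t^i$ for $e'\in K$ and the fact that $e,e'\in K\subseteq{\rm Nuc}(A)$ associate freely, that
\[(e\otimes e')\cdot x = e\,x\,e' = \sum_{i=0}^{n-1} e\,\sigma^i(e')\,t^i .\]
Reading off the coefficient of $t^i$, the map $u\mapsto u\cdot x$ from $K\otimes_F K$ to $A$ is exactly the map $e\otimes e'\mapsto (e\,\sigma^i(e'))_{0\le i<n}$, which is the canonical isomorphism $K\otimes_F K\cong K^n$ valid because $K$ is a Galois $C_n$-algebra. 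Thus $x$ generates $A$ over $K\otimes_F K$, giving cyclicity (and, being bijective, faithfulness directly).

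\emph{Parts (iii)--(v).} For (iii) I would use the identification $A\otimes_F K\cong(K\otimes_F K,\sigma,d)$ recorded before Theorem \ref{thm:nuc}, a nonassociative cyclic algebra over the base field $K$ whose distinguished \'etale subalgebra $K\otimes_F K\cong K^n$ is split; by part (i) this subalgebra lies in the nucleus, so $K^n$ is a unital subalgebra of the nucleus and $A\otimes_F K$ is split by \cite[Proposition 7.2]{BHMRV}. Part (iv) is Steele's criterion \cite{S12}, which I would note carries over unchanged; for prime $n$ the hypothesis is automatic, since $d\in K\setminus F$ forces $F(d)=K$ (as $[F(d):F]\mid n$ and exceeds $1$), so that $1,d,\dots,d^{n-1}$ is an $F$-basis of $K$. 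For (v), with $K/F$ a field extension, Theorem \ref{thm:nuc}(ii) gives ${\rm Nuc}(A)=K$; being a field, $J({\rm Nuc}(A))=0$, hence ${\rm Nuc}(A)/J({\rm Nuc}(A))=K$ has the single atom $K$, whose center $K$ is $\neq F$ because $d\in K\setminus F$. So $A$ is not semicentral, and therefore not homogeneous by \cite[Definition 17.2]{BHMRV}.

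\emph{Part (ii), the main point.} By \cite[Proposition 7.2]{BHMRV}, $A$ is split if and only if $F^n$ is a unital subalgebra of ${\rm Nuc}(A)$, so I must show $F^n\subseteq{\rm Nuc}(A)\iff K=F^n$. The direction $\Leftarrow$ is immediate ($F^n=K\subseteq{\rm Nuc}(A)$). For $\Rightarrow$, a witness $F^n\subseteq{\rm Nuc}(A)$ provides $n$ nonzero pairwise-orthogonal $F$-rational idempotents summing to $1$, and I want to force $K\cong F^n$. When $K/F$ is a field, ${\rm Nuc}(A)=K$ is a field (Theorem \ref{thm:nuc}(ii)) with no nontrivial idempotents, so $F^n\not\subseteq{\rm Nuc}(A)$ for $d\in K\setminus F$ (which requires $n\ge 2$); both sides of the equivalence are then false, consistently with $K\neq F^n$.

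The delicate case, and the main obstacle, is $K$ \'etale but not a field: here ${\rm Nuc}(A)$ may strictly contain $K$, because the left and middle nuclei of the Petit algebra need not collapse to $K$ once $K$ is not a division ring, so I cannot simply place the splitting idempotents inside $K$. The plan is to use that $K$ is a \emph{maximal commutative} subalgebra of $A$ \cite[Corollary 7.3]{BHMRV}: if each idempotent $f_i$ centralizes $K$ then $f_i\in K$, whence $F^n\subseteq K$, and an $n$-dimensional \'etale algebra contains $n$ orthogonal idempotents summing to $1$ only if it is $\cong F^n$, giving $K=F^n$. Establishing that the splitting idempotents must centralize $K$ — equivalently, pinning down ${\rm Nuc}(A)$ beyond the containments $K\subseteq{\rm Nuc}(A)\subseteq{\rm Nuc}_r(A)$ of Theorem \ref{thm:nuc}(i) — is the hard step; I would attack it either by decomposing $K$ into its connected components under $\langle\sigma\rangle$ and tracking the induced decomposition of $A$, or by reducing splitting over $F$ to splitting of the \'etale subalgebra of the nucleus via the splitting-field comparisons cited in Section~\ref{subsec:na}.
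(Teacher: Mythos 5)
Your parts (i), (iii), (iv) and (v) are correct, and in fact more detailed than the paper, whose proof of (i)--(iii) consists of the single remark that they are ``straightforward employing results from \cite{BHMRV}''; your cyclic generator $1+t+\dots+t^{n-1}$ together with the isomorphism $K\otimes_FK\cong K^n$ for a Galois $C_n$-algebra is exactly the right way to substantiate (i), and your proof of (v) coincides with the paper's (nucleus $=K$ by Theorem \ref{thm:nuc}(ii), radical zero, single atom $K$ not $F$-central, hence not semicentral and a fortiori not homogeneous).

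The one genuine gap is the converse direction of (ii) when $K$ is \'etale but not a field, and you are right to flag it: the results quoted in Section \ref{subsec:na} do not close it, because Theorem \ref{thm:nuc} asserts ${\rm Nuc}(A)=K$ only for field extensions, and ${\rm Nuc}_r(A)$ really can contain a copy of $F^n$ in the \'etale case (e.g.\ for $K=L\times L$, $n=4$, $d=(d_1,d_2)$ with $d_1\neq d_2\in F$, one gets ${\rm Nuc}_r(A)\cong (L/F,\tau,d_1)\times(L/F,\tau,d_2)$, which may be $M_2(F)\times M_2(F)$). The paper's own proof does not address this either, so you have not missed an argument that is on the page; but the gap can be closed along the first route you sketch. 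The point is that ${\rm Nuc}_m(A)=K$ for every $d\in K\setminus F$: for $g=\sum_k c_kt^k$ and $1\le k\le n-1$ one computes
\[
[at^{\,n-k},\,c_kt^k,\,bt^j]=a\,\sigma^{n-k}(c_k)\,b\,\bigl(d-\sigma^j(d)\bigr)t^j ,
\]
and since the terms of $g$ with different $k$ land in different degrees there is no cancellation, so $g\in{\rm Nuc}_m(A)$ forces $\sigma^{n-k}(c_k)$ to annihilate every $d-\sigma^j(d)$. Because $\langle\sigma\rangle$ permutes the primitive idempotents of the Galois $C_n$-algebra $K$ transitively and acts on each component field through its full stabiliser, the elements $d-\sigma^j(d)$ generate the unit ideal of $K$ unless $d\in F$; hence $c_k=0$ for all $k\ge1$ and ${\rm Nuc}(A)={\rm Nuc}_m(A)\cap{\rm Nuc}_r(A)\cap{\rm Nuc}_l(A)=K$. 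Then $A$ split $\iff F^n\subseteq{\rm Nuc}(A)=K\iff K\cong F^n$, since an $n$-dimensional commutative algebra containing $F^n$ unitally is $F^n$. With this supplement your (ii) is complete; without it, the case of non-field \'etale $K$ remains unproved.
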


\begin{proof}
The proof of (i), (ii), (iii) is straightforward employing results from \cite{BHMRV} listed in Section \ref{subsec:na}. For $n=2$, (i) was already pointed out in \cite{BHMRV}. 
\\ (v) Since $K= {\rm Nuc}((K/F,\sigma, d))$ we know that  $A=(K/F,\sigma, d)$ is not semicentral  for all $d\in K\setminus F$.
Since $(K/F,\sigma,d)$  is not semicentral, it does not lie in the similarity class of any $F$-central simple algebra $B$ in $Br^{sa}(F)$ and is thus not homogeneous \cite{BHMRV}. The semisimple quotient ${\rm Nuc}(A)/J({\rm Nuc}(A))=K$ has $K$ as its only simple component (i.e. atom), and $K$ is not $F$-central.
\end{proof}

\begin{example}
The split nonassociative quaternion algebra defined in \cite{W} is simple, has nucleus $F\times F$, the basis $e_1,e_2,e_3,e_4$, with $e_1=(1,0)$, $e_2=(0,1)$ and unit element $e_1+e_2$, and its multiplication is given by $e_1e_1=e_1$, $e_1e_2=0=e_2e_1$, $e_1e_3=e_3$, $e_1e_4=0$, $e_2e_2=e_2$,
$e_2e_3=0$, $e_2e_4=e_4$, $e_3e_1=0$, $e_3e_2=e_3$, $e_3e_4=e_1$, $e_4e_1=e_4$, $e_4e_2=0$, $e_4e_3=\lambda e_2$, $e_4e_4=0$, with $\lambda\not=0,1$.
This is the skew matric algebra $M_2(F;c)$ of degree 2 with the reduced tensor given by $c_{212}=\lambda$, $c_{121}=1$, where $e_{11}=e_1, e_{22}=e_2, e_3=e_{12}, e_4=e_{21}$. As already noted in  \cite{W}, $M_2(F;c)=M_2(F)$ when $\lambda=1$, and for a nonassociative quaternion algebra $(K/F,\sigma,d)$ we have $(K/F,\sigma,d)\otimes_FK \cong M_2(F;c)$ with $\lambda=\sigma(d)/d$ in the reduced tensor $c$.
This is a skew matrix algebra of the type mentioned in \cite[Example 6.12 (3)]{BHMRV}.
\end{example}

\begin{remark}
Let $K$ be an \'etale algebra of dimension $n$ over $F$ and $\sigma\in {\rm Aut}_F(K)$ of order $n$.
 Let $f(t)=t^n\in K[t;\sigma]$. Then $S_f=K[t;\sigma]/K[t;\sigma]f$ is an associative algebra over $F$ which is semiassociative but not simple; its semisimple quotient is $K$. Abusing notation we denote it by $(K/F,\sigma, 0)$ (cf. \cite[Remark 3.8]{BHMRV} for $n=2$). It can be viewed as a generalization of the dual quaternion algebra $\mathbb{H}\otimes_\mathbb{R}\mathbb{D}$ used in physics, where $\mathbb{D}=\mathbb{R}[t]/(t^2)$ are the dual numbers.
\end{remark}

 \subsection{Nonassociative generalized cyclic algebras} \label{sec:nonasscyclic}   (For  details on the case that $B$ is a division algebra, cf. \cite{BP})

 Let $B$ be a central simple algebra over $C$ (i.e., $C$-central) of degree $n$, and $\sigma\in {\rm Aut}(B)$ such that
$\sigma|_{C}$ has finite order $m$  and put $F={\rm Fix}(\sigma)\cap C$. Then $C/F$ is a cyclic Galois field extension of
 degree $m$ with $\mathrm{Gal}(C/F) = \langle \sigma|_{C} \rangle$.

\begin{definition}
Let $f(t)=t^m-d\in B[t;\sigma]$, $d\in B^\times$. We call
$(B,\sigma, d)=B[t;\sigma]/B[t;\sigma]f$
 a \emph{nonassociative generalized  cyclic algebra (of degree $mn$)} over $F$. If $(B,\sigma, d)$ is not associative, we also call $(B,\sigma, d)$ a \emph{proper} nonassociative generalized  cyclic algebra.
 \end{definition}

This definition generalizes the definition of both a nonassociative and an associative generalized  cyclic algebra in \cite{BP} (see \cite[p.~19]{J96} for the associative case), which also assumed that $B$ is a division algebra.

  The algebra $(B,\sigma, d)$ has dimension $m^2n^2$ over $F$ and is $F$-central.
If $d\in F^\times$ and $B$ is a division algebra, then $(B,\sigma, d)$ is a classical associative generalized cyclic algebra over $ F$ of degree $mn$. Indeed,  $(B,\sigma, d)$ is associative if and only if $d\in F$.

For a proper generalized cyclic algebra $(B,\sigma, d)$, we have $B\subset{\rm Nuc}_l((B,\sigma, d))={\rm Nuc}_m((B,\sigma, d))$  with equality when $B$ is a division algebra. Moreover,
 if $d\in C\setminus F$ then also $B\subset {\rm Nuc}_r((B,\sigma, d))$, i.e. $B\subset{\rm Nuc}((B,\sigma, d))$ with equalities when $B$ is a division algebra  \cite{BP18}.

In particular, if $B=C$, $C/F$ is a cyclic Galois extension of degree $m$ with Galois group generated by
$\sigma$ and $ f(t)=t^m-d\in C[t;\sigma]$,  we obtain the nonassociative cyclic algebra $(C/F,\sigma,d)$ as a special case.

If $D$ is a division algebra of degree $n$ over $C$, then
$(D, \sigma, d)$ is a division algebra over
$F_0$ if and only if $t^m-d\in D[t;\sigma]$ is irreducible \cite[(7)]{P66}.
 We know that
 $t^2-d\in D[t;\sigma]$ is irreducible  if and only if $\sigma(z)z\not=d$ for all $z\in D$, and
 $t^3-d\in D[t;\sigma]$ is irreducible  if and only if $d\not=\sigma^2(z)\sigma(z)z$
 for all $z\in D$ 
(cf. \cite{P66, P16}, and \cite[Theorem 3.19]{CB}, \cite{BP18}). More generally, if $F$
contains a primitive $m$th root of unity and $m$ is prime then $t^m-d\in D[t;\sigma]$ is irreducible if and only if
$d\not=\sigma^{m-1}(z)\cdots\sigma(z)z$ for all $z\in D$ (\cite[Theorem 3.11]{CB}, see also \cite[Theorem 6]{P16}). This generalizes the equivalent condition in the associative setup.

\begin{lemma}
Let $K$ be a maximal \'etale subalgebra of $B$ of dimension $n$.
\\ (i) 
For all  $d\in C\setminus F$, the algebra $(B,\sigma, d)$ contains the \'etale algebra $K/F$ of dimension $mn$ in its nucleus.
\\ (ii)
For all $d\in C^\times$, $(B,\sigma, d)$ is a $K$-semiassociative algebra over $F$ of degree $mn$. If $d\in C\setminus F$ then
$(B,\sigma, d)$ is not  semicentral and not homogeneous.
\\ (iii) Suppose that $m$ is prime and that $F$
contains a primitive $m$th root of unity.  Let $d\in C\setminus F$. Assume that $B=D$ is a division algebra and that $d\not=\sigma^{m-1}(z)\cdots\sigma(z)z$ for all $z\in D$. Then $D$ is the only atom of $(D,\sigma, d)$.

\end{lemma}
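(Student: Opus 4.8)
The plan is to first upgrade the hypotheses to the statement that $(D,\sigma,d)$ is a division algebra, and then to read its atoms directly off the nucleus. Set $A=(D,\sigma,d)$. Since $m$ is prime, $F$ contains a primitive $m$th root of unity, and $d\neq \sigma^{m-1}(z)\cdots\sigma(z)z$ for all $z\in D$, the irreducibility criterion recalled above (\cite[Theorem 3.11]{CB}, see also \cite[Theorem 6]{P16}) shows that $f(t)=t^m-d\in D[t;\sigma]$ is irreducible. As $D$ is a division algebra, the Petit algebra $A=D_f$ then has no zero divisors, either by \cite[(7)]{P66} or by the general fact recalled above that $S_f$ has no zero divisors iff $f$ is irreducible when $S$ is a division ring; being finite-dimensional, $A$ is therefore a division algebra by \cite[pp.~15, 16]{Sch}.

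Next I would use the division property to pin down the number of atoms, and the nucleus computation to name the single one. The nucleus $N={\rm Nuc}(A)$ is an associative subalgebra of $A$ containing $F1$, and $A$ is a division algebra, so $N$ is a finite-dimensional associative $F$-algebra without zero divisors, hence itself a division algebra. In particular $N$ is a simple ring with $J(N)=0$, so the semisimple quotient ${\rm Nuc}(A)/J({\rm Nuc}(A))$ equals $N$ and has exactly one simple component; thus $A$ has a single atom. To identify it, I invoke the nucleus computations recalled before the lemma: since $d\in C\setminus F$ and $D$ is a division algebra, one has ${\rm Nuc}_l(A)={\rm Nuc}_m(A)=D$ and $D\subseteq {\rm Nuc}_r(A)$, and intersecting the three nuclei gives $N=D$. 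Hence the unique atom of $A$ is $D$.

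The step I expect to require the most care is the exact equality ${\rm Nuc}(A)=D$. The inclusion $D\subseteq{\rm Nuc}(A)$ is exactly the cited content (and needs $d\in C\setminus F$), while the reverse inclusion rests on ${\rm Nuc}_l(A)=D$, which in turn uses that $D$ is a division algebra. The right nucleus ${\rm Nuc}_r(A)$ may a priori be strictly larger than $D$ — precisely as in the purely cyclic case of Theorem \ref{thm:nuc}, where ${\rm Nuc}_r$ grows together with the nontrivial stabilizer of $d$ — but since ${\rm Nuc}(A)$ is the intersection of the three nuclei and already ${\rm Nuc}_l(A)=D$, this possible enlargement does not affect the final answer. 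Finally, I would record for context that $D$ is $C$-central with $C\neq F$ (as $[C:F]=m>1$), so this single atom is not $F$-central; this is consistent with the non-semicentrality of $A$ asserted in part (ii).
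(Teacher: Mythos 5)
Your argument for part (iii) is correct and follows essentially the same route as the paper: the hypotheses force $t^m-d\in D[t;\sigma]$ to be irreducible by the cited criterion, hence $(D,\sigma,d)$ is a division algebra, and the nucleus computation $ {\rm Nuc}((D,\sigma,d))=D$ (valid because $d\in C\setminus F$ gives $D\subseteq{\rm Nuc}_r$, while $D$ being a division algebra gives ${\rm Nuc}_l={\rm Nuc}_m=D$, so the intersection is exactly $D$) identifies the semisimple quotient as the simple algebra $D$ itself, yielding a single atom. Your extra observation that the possible enlargement of ${\rm Nuc}_r$ is irrelevant because the intersection is controlled by ${\rm Nuc}_l$ is a sensible precaution, and your remark that $D$ is $C$-central with $C\neq F$ correctly ties this to non-semicentrality. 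In fact your detour through ``$N$ is a finite-dimensional associative subalgebra without zero divisors, hence a division ring, hence $J(N)=0$'' is not even needed once $N=D$ is established, but it does no harm.

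The genuine gap is that the lemma has three parts and you have proved only the third. Part (i) requires showing that the $mn$-dimensional \'etale algebra $K$ lies in ${\rm Nuc}((B,\sigma,d))$ for $d\in C\setminus F$; this follows at once from $K\subseteq B\subseteq {\rm Nuc}((B,\sigma,d))$, the last inclusion being exactly the cited nucleus result for $d\in C\setminus F$ (note that here $B$ is only assumed central simple, not a division algebra, so one only has inclusions, not equalities). Part (ii) splits into the associative case $d\in F^\times$, where $(B,\sigma,d)$ is a central simple algebra and trivially semiassociative, and the case $d\in C\setminus F$, where one must verify that $(B,\sigma,d)$ is faithful and cyclic over $K^e=K\otimes_F K$ (faithfulness plus the dimension count ${\rm dim}\,A=(mn)^2=({\rm dim}\,K)^2$ gives cyclicity by the cited Remark 3.3 of the Blachar--Haile--Matzri--Rein--Vishne paper), and that the presence of the $C$-central simple algebra $B$ inside ${\rm Nuc}(A)$ forces an atom that is not $F$-central, so $A$ is not semicentral and hence not homogeneous. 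None of these steps is difficult, but they are not consequences of anything you wrote, so as it stands the proposal is incomplete as a proof of the lemma.
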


\begin{proof}
(i) Since $K\subset B\subset {\rm Nuc}((B,\sigma, d))$ this is obvious.
\\ 
(ii) If $d\in F^\times$ then $(B,\sigma, d)$ is an associative central simple algebra over $F$ and trivially semiassociative.
For all $d\in C\setminus  F$  the \'etale algebra $K/F$ of degree $mn$ lies in ${\rm Nuc}((B,\sigma, d))$. The rest is a straightforward calculation as well:  $(B,\sigma, d)$ is a faithful $K^e$-module, thus cyclic
  as a $K^e$-module.
In particular, since $B\subset {\rm Nuc}((B,\sigma, d))$ is an $C$-central simple algebra, $(B,\sigma,d)$ is not semicentral and therefore also not homogeneous.
\\ (iii)  By our assumptions, $f(t)=t^m-d\in D[t;\sigma]$ is irreducible (\cite[Theorem 3.11]{CB}) and therefore $A=(D,\sigma, d)$ is a division algebra.  The semisimple quotient ${\rm Nuc}(A)/J({\rm Nuc}(A))=D$ has $D$ as its only simple component (i.e. as its only atom), and $D$ is $C$-central and not $F$-central.

\end{proof}

\begin{theorem}\label{thm:7}
Let $B=D$ be a $C$-central division algebra  of degree $n$ and  $d\in C\setminus F$. Then
  $C/F$ is a cyclic Galois field extension of degree $m$ with Galois group $G=\langle \sigma|_C\rangle$. Write $\sigma=\sigma|_C$ for ease of notation. Let $H=\{\tau\in G\,|\,\tau(d)=d \}$. Then $H=\langle \sigma^s\rangle$ for some integer $s$ such that $m=sr$ and
 $${\rm Nuc}_r((D,\sigma, d))=D[t;\sigma^s]/(t^m-d).$$
 In particular, if $m$ is prime then ${\rm Nuc}_r((D,\sigma, d))=D$.
\end{theorem}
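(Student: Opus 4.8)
The plan is to read the right nucleus off directly from the eigenspace description recorded in Section~\ref{sec:2}, namely $\mathrm{Nuc}_r(S_f)=\{g\in R\,|\,\deg(g)<m \text{ and } fg\in Rf\}$, applied to $R=D[t;\sigma]$ and $f(t)=t^m-d$. The argument runs parallel to the (cited) proof of Theorem~\ref{thm:nuc}, but carried out as a self-contained reduction.

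First I would dispose of the group-theoretic claim. Since $\sigma|_C$ has order $m$, the group $G=\langle\sigma|_C\rangle$ is cyclic of order $m$, so its stabilizer $H=\{\tau\in G\,|\,\tau(d)=d\}$ is the unique subgroup of its order, whence $H=\langle\sigma^s\rangle$ with $s=m/|H|$; writing $r=|H|$ gives $m=sr$. By construction $s$ is the least positive integer with $\sigma^s(d)=d$, so for $0\le i<m$ we have $\sigma^i(d)=d$ if and only if $s\mid i$. This is the only place the Galois structure of $C/F$ enters.

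Then I would do the reduction. Writing a general element of degree $<m$ as $g=\sum_{i=0}^{m-1}g_it^i$ with $g_i\in D$, I reduce $fg=t^mg-dg$ modulo $f$ by right division. Using $ta=\sigma(a)t$ together with the single reduction $t^{m+i}=t^if+\sigma^i(d)t^i\equiv\sigma^i(d)t^i\pmod{Rf}$, one obtains
\[ fg\equiv\sum_{i=0}^{m-1}\bigl(\sigma^m(g_i)\,\sigma^i(d)-d\,g_i\bigr)t^i\pmod{Rf}, \]
so $g\in\mathrm{Nuc}_r$ exactly when $\sigma^m(g_i)\,\sigma^i(d)=d\,g_i$ for every $i$. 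Now $\sigma^m|_C=\mathrm{id}$, and since $d\in C$ is central, the inclusion $B=D\subseteq\mathrm{Nuc}_r$ (quoted from \cite{BP18} for $d\in C\setminus F$) forces $\sigma^m(a)=dad^{-1}=a$ for all $a\in D$, i.e.\ $\sigma^m|_D=\mathrm{id}$. Hence the condition collapses to $g_i(\sigma^i(d)-d)=0$. Because $D$ is a division ring it has no zero divisors, giving the clean dichotomy $g_i=0$ or $\sigma^i(d)=d$, the latter holding precisely when $s\mid i$. Thus $\mathrm{Nuc}_r=\bigoplus_{j=0}^{r-1}Dt^{js}$, and reading off the induced product $(at^{is})(bt^{js})=a\,\sigma^{is}(b)\,t^{(i+j)s}$ reduced modulo $t^{rs}=t^m\equiv d$ identifies this subalgebra with the Petit algebra $D[t;\sigma^s]/(t^m-d)$ (put $u=t^s$, so $u^r=d$). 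For the final clause, when $m$ is prime the hypothesis $d\notin F=\mathrm{Fix}(G)$ forces $H\neq G$, hence $H=\{1\}$, $s=m$, $r=1$, and only the $i=0$ term survives, giving $\mathrm{Nuc}_r=D$.

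The hard part will be pinning down the action of $\sigma^m$ on $D$: the naive reduction produces the factor $\sigma^m(g_i)$, and the no-zero-divisors argument only applies cleanly once one knows $\sigma^m|_D=\mathrm{id}$. I resolve this via the centrality of $d\in C$ together with the stated inclusion $B\subseteq\mathrm{Nuc}_r$; equivalently it is built into the generalized cyclic construction, being exactly what renders $(D,\sigma,d)$ associative precisely when $d\in F$. Everything else is the same routine skew-division bookkeeping already invoked in Theorem~\ref{thm:nuc}.
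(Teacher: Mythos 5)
Your proof is correct and follows essentially the same route as the paper, which simply delegates the computation to ``analogous proofs'' of Steele's Proposition 3.2.3 and Theorem 5.1: both arguments identify the right nucleus with the eigenspace $\{g : fg \in Rf\}$, reduce $fg$ modulo $f$ coefficientwise to get the condition $\sigma^m(g_i)\sigma^i(d)=dg_i$, and use that $D$ has no zero divisors to conclude $g_i=0$ unless $\sigma^i(d)=d$, i.e.\ $s\mid i$. Your explicit handling of the point that $\sigma^m|_D=\mathrm{id}$ (needed to collapse the condition to $g_i(\sigma^i(d)-d)=0$, and extracted from the quoted inclusion $D\subseteq \mathrm{Nuc}_r((D,\sigma,d))$ together with the centrality of $d$) addresses a genuine subtlety of the division-algebra case that the paper's by-analogy argument passes over silently, so your write-up is, if anything, more complete.
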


\begin{proof}
By an analogous proof as the one for \cite[Proposition 3.2.3]{S13}, it is easy to see that ${\rm Nuc}_r((D,\sigma, d))=D\oplus Dt^s \oplus \dots \oplus Dt^{(r-1)s}$ as a left $D$-module.
By an analogous proof as the one for  \cite[Theorem 5.1]{S12}, the linear subspace $D\oplus Dt^s \oplus \dots \oplus Dt^{(r-1)s}$, together with the inherited algebra multiplication, is the associative subalgebra $D[t;\sigma^s]/(t^m-d)$
of dimension $n^2[C:F]r=n^2mr$ over $F$. Here, $d\in E={\rm Fix}(\sigma^s)$, $E$ is a proper subfield of $C/F$ and $D[t;\sigma^s]/(t^m-d)$ has center $E$.
\end{proof}

\begin{corollary}\label{cor:8}
Let $B=D$ be a division algebra  and suppose that $d\in C\setminus F$, $G=\langle \sigma|_C\rangle$, and $H=\{\tau\in G\,|\,\tau(d)=d \}$,  so that there exists an integer $s$ with $1<s<n$ with $H=\langle \sigma^s\rangle$. Then $(D,\sigma, d)^{op}$ is not isomorphic to a nonassociative generalized cyclic algebra.
\end{corollary}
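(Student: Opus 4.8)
The plan is to find an isomorphism invariant that separates $(D,\sigma, d)^{op}$ from every nonassociative generalized cyclic algebra, exactly as in the proof of Corollary \ref{cor:opp cyclic}(ii), but using the relationship between the \emph{left} and \emph{middle} nuclei rather than the left nucleus alone. The starting observation is that for any algebra $A$ the opposite associator satisfies $[x,y,z]_{A^{op}}=-[z,y,x]_A$, whence ${\rm Nuc}_l(A^{op})={\rm Nuc}_r(A)$, ${\rm Nuc}_m(A^{op})={\rm Nuc}_m(A)$, and ${\rm Nuc}_r(A^{op})={\rm Nuc}_l(A)$. I would record this first and apply it to $A=(D,\sigma, d)$.

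Next I would read off the nuclei of $A$ from what is already available. Since $d\in C\setminus F$ and $B=D$ is a division algebra, the discussion preceding the lemma gives ${\rm Nuc}_l(A)={\rm Nuc}_m(A)=D$, while Theorem \ref{thm:7} identifies ${\rm Nuc}_r(A)=D[t;\sigma^s]/(t^m-d)$. The hypothesis forces $r=m/s>1$, and this makes the right nucleus strictly larger than $D$: as a left $D$-module it is $D\oplus Dt^s\oplus\cdots\oplus Dt^{(r-1)s}$, of $F$-dimension $n^2mr>n^2m=\dim_F D$. Transporting through the opposite then yields ${\rm Nuc}_l(A^{op})=D[t;\sigma^s]/(t^m-d)\supsetneq D={\rm Nuc}_m(A^{op})$; in particular the left and middle nuclei of $(D,\sigma, d)^{op}$ are distinct (one may alternatively note that their centers, $E$ and $C$, differ because $E$ is a proper subfield of $C$).

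It then remains to contrast this with the structure of an arbitrary nonassociative generalized cyclic algebra $(B',\sigma', d')$ over $F$. The text records $B'\subseteq {\rm Nuc}_l((B',\sigma', d'))={\rm Nuc}_m((B',\sigma', d'))$, so its left and middle nuclei always coincide; and in the degenerate case $d'\in F$ the algebra is associative, so all three nuclei equal the whole algebra and again agree. Since an algebra isomorphism carries left nucleus to left nucleus and middle nucleus to middle nucleus, the equality ${\rm Nuc}_l={\rm Nuc}_m$ is an isomorphism invariant. As $(D,\sigma, d)^{op}$ fails it, it is isomorphic to no such algebra.

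The step carrying the real content is the strictness ${\rm Nuc}_r(A)\supsetneq {\rm Nuc}_m(A)$, which is precisely where the hypothesis $r>1$ enters and which is what endows the opposite algebra with an asymmetry between its left and middle nuclei that no generalized cyclic algebra possesses. The only point requiring care is to invoke Theorem \ref{thm:7} with the correct $\sigma^s$ and to confirm that the containment $D\subsetneq {\rm Nuc}_r(A)$ is strict rather than a mere inclusion; both follow from the module decomposition and dimension count above. I note that one cannot simply imitate Corollary \ref{cor:opp cyclic}(ii) by comparing the left nuclei alone via commutativity, since here both candidate left nuclei are noncommutative central simple algebras --- which is exactly why the left-versus-middle comparison is the appropriate tool.
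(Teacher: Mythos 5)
Your proof is correct and follows essentially the same route as the paper's: both arguments rest on Theorem \ref{thm:7} together with the identity ${\rm Nuc}_l(A^{op})={\rm Nuc}_r(A)$, the strictness $D\subsetneq D[t;\sigma^s]/(t^m-d)$ forced by $r>1$, and the fact that a proper generalized cyclic algebra has ${\rm Nuc}_l={\rm Nuc}_m=D'$. The only (cosmetic) difference is that you close by observing the internal asymmetry ${\rm Nuc}_l(A^{op})\neq{\rm Nuc}_m(A^{op})$, whereas the paper compares ${\rm Nuc}_l(A^{op})$ against the left nucleus of a candidate algebra after using the middle nucleus to pin down $D'=D$ --- a slightly tidier packaging of the same ingredients.
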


\begin{proof}
By Theorem \ref{thm:7}, ${\rm Nuc}_r(((D,\sigma, d))=D[t;\sigma^s]/(t^m-d)$ is a noncommutative algebra  over the field $E$ unequal to $D$, where $E$ is a proper intermediate field of $C/F$. Since the right nucleus of $(D,\sigma, d)$ is the left nucleus of $(D,\sigma, d)^{op}$,
thus the left nucleus ${\rm Nuc}_l((D,\sigma, d)^{op})$ is unequal to the left nucleus of any nonassociative cyclic algebra
$(D',\sigma', d')$:  ${\rm Nuc}_l((D',\sigma', d'))=D'$. (Note that the middle nucleus of a proper generalized nonassociative cyclic algebra $(D,\sigma, d)$ is $D$, so if $A$ is a nonassociative cyclic algebra isomorphic to $(D,\sigma, d)^{op}$, $A$ must involve the same central simple algebra $D$, up to isomorphism, so here in fact we can even assume that $D=D'$.)
\end{proof}

Note that $(D,\sigma, d)^{op}$ is also semiassociative, however, and again not semicentral and not homogeneous.

\subsection{Menichetti algebras} \cite{M, P23}

Let  $K/F$ be a Galois field extension of $F$ of degree $m$ with ${\rm Gal}(K/F)=\{ \tau_0,\dots,\tau_{m-1} \}$.
Let $k_i\in K^\times$, $i\in 0,\dots,m-1$, and let
$$c_{i,j}=k_0^{-1}k_1^{-1} \cdots    k_{j-1}^{-1}   k_ik_{i+1}\cdots k_{i+j-1}$$
for all $i,j\in \mathbb{Z}_m$.
Let $z_0,\dots,z_{m-1}$ be an $F$-basis of $K^m$ and define a multiplication on $K^m$ via
$$(az_i)\cdot  (bz_j)=\tau_j(a)b (z_i\cdot  z_j), \quad
z_i\cdot  z_0=z_0\cdot  z_i =z_i \text{ for all } i\in\mathbb{Z}_m,$$
$$z_i\cdot  z_j=c_{ji}z_{i+j} \text{ for all } i\in\mathbb{Z}_m\setminus \{0\}$$
for all $a,b\in K$.
Then $K^m$ is a nonassociative unital algebra over $F$ of dimension $m^2$ we denote $(K/F, k_0,\dots,k_{m-1})$, and call a \emph{Menichetti algebra}, as the construction generalizes \cite{M}.
Define
\[ M(x_0,\dots,x_{m-1})=\left [\begin {array}{cccccc}
x_0 &  c_{m-1,1}\tau_1(x_{m-1}) & ... & c_{1,m-1} \tau_{m-1}(x_{1})\\
x_1 & \tau_1(x_0) & ... &  c_{2,m-1} \tau_{m-1}(x_{2})\\
x_2 & c_{1,1}\tau_1(x_{1}) & \tau_2(x_0) & c_{3,m-1} \tau_{m-1}(x_{3})\\
...& ...  & ... & ...\\
x_{m-2} &  c_{m-3,1}\tau_1(x_{m-3})  & ...& c_{m-1,m-1} \tau_{m-1}(x_{m-1})\\
x_{m-1} &  c_{m-2,1}\tau_1(x_{m-2})  & ...& \tau_{m-1}(x_{0})\\
\end {array}\right ]
 \]
and identify $x_0z_0+\dots +x_{m-1}z_{m-1}$ with $(x_0,\dots,x_{m-1})$, $x_i\in K$, then
 $$(x_0,\dots,x_{m-1})\cdot  (y_0,\dots,y_{m-1})=M(x_0,\dots,x_{m-1})(y_0,\dots,y_{m-1})^t.$$
 It is easy to see that $K\subset {\rm Nuc}((K/F, k_0,\dots,k_{m-1}))$ and that $(K/F, k_0,\dots,k_{m-1})$ is a semiassociative algebra over $F$.
 The algebras $(K/F, k_0,\dots,k_{m-1})^{op}$ generalize  nonassociative cyclic algebras $(K/F,\sigma,d)$ \cite{P23}.

\subsection{Generalized Menichetti algebras} \cite{P23}

 Let $D$ be a central simple algebra over $C$  of degree $n$. Let $\sigma\in {\rm Aut}(D)$ such that
$\sigma|_{C}$ has finite order $m$, and put $F={\rm Fix}(\sigma)\cap C$. Assume that
  $C/F$ is a cyclic Galois field extension of degree $m$ with $\mathrm{Gal}(C/F) = \langle \sigma|_{C} \rangle$
 (this is automatically satisfied, if $D$ is a division algebra).
Let $k_i\in C^\times$, $i\in 0,\dots,m-1$, and
$$c_{i,j}=k_0^{-1}k_1^{-1} \cdots    k_{j-1}^{-1}   k_ik_{i+1}\cdots k_{i+j-1}$$
for all $i,j\in \mathbb{Z}_m$.
Let  $z_0,\dots,z^{m-1}$ be a $C$-basis of $D^m$ and define a multiplication on $D^m$ via
$$(az_i)\cdot  (bz_j)=\sigma^j(a)b (z_i\cdot  z_j),$$
$$z_i\cdot  z_0=z_0\cdot  z_i =z_i \text{ for all } i\in\mathbb{Z}_m,\quad z_i\cdot  z_j=c_{ji}z_{i+j} \text{ for all } i\in\mathbb{Z}_m\setminus \{0\}$$
for all $a,b\in D$. This yields a nonassociative unital algebra over $F$ of dimension $n^2m^2$ that we denote by $(D, \sigma, k_0,\dots,k_{m-1})$ and call a \emph{generalized Menichetti algebra of degree $mn$}. Its multiplication can be written as
$$(x_0,\dots,x_{m-1})\cdot  (y_0,\dots,y_{m-1})=M(x_0,\dots,x_{m-1})(y_0,\dots,y_{m-1})^t$$
for all $x_i,y_i\in D$, with
\[ M(x_0,\dots,x_{m-1})=\left [\begin {array}{cccccc}
x_0 &  c_{m-1,1}\sigma(x_{m-1}) & ... & c_{1,m-1} \sigma^{m-1}(x_{1})\\
x_1 & \sigma(x_0) & ... &  c_{2,m-1} \sigma^{m-1}(x_{2})\\
x_2 & c_{1,1}\sigma(x_{1}) & \sigma^2(x_0) & c_{3,m-1} \sigma^{m-1}(x_{3})\\
...& ...  & ... & ...\\
x_{m-2} &  c_{m-3,1}\sigma(x_{m-3})  & ...& c_{m-1,m-1} \sigma^{m-1}(x_{m-1})\\
x_{m-1} &  c_{m-2,1}\sigma(x_{m-2})  & ...& \sigma^{m-1}(x_{0})\\
\end {array}\right ].\]
 Generalized Menichetti algebras $(D, \sigma, k_0,\dots,k_{m-1})^{op}$ can be seen as generalizations of the generalized cyclic algebras $(D^{op},\sigma,d)$ for $d\in C$.

  Let $E$ be a maximal \'etale subalgebra of $D$ of dimension $n$. Then
  $(D, \sigma, k_0,\dots,k_{m-1})$ has the \'etale algebra $E/F$ of dimension $mn$ in its nucleus.  $(D, \sigma, k_0,\dots,k_{m-1})$ is a faithful $E^e$-module,  is cyclic
  as a $E^e$-module \cite[Remark 3.3]{BHMRV} and thus a semiassociative algebra \cite{BHMRV}.

Let now $K/C$ be a cyclic field extension of degree $m$  with ${\rm Gal}(K/C)=\langle \sigma\rangle$.
Let $D_0$ be a central simple algebra over $C$ of degree $n$, and put $D=D_0\otimes_C K$. Let $\widetilde{\sigma}$ be the unique extension of $\sigma$ to $D$ such that $\widetilde{\sigma}|_{D_0}=id_{D_0}$.   Then it is straightforward to check that
$$A=D_0\otimes_F (K/F, k_0,\dots,k_{m-1})\cong (D_0\otimes_C K, \widetilde{\sigma}, k_0,\dots,k_{m-1})$$
is a nonassociative generalized Menichetti algebra over $F$ of degree $mn$.
Since ${\rm Nuc}(A)=D_0\otimes {\rm Nuc}((K/F, k_0,\dots,k_{m-1}))$, we have
$D\subset {\rm Nuc}(A)$. $A$ is a semiassociative algebra.


 \section{The tensor product of an associative algebra and a nonassociative cyclic algebra}\label{sec:tensor}


 The tensor product of two semiassociative algebras is again a semiassociative algebra.   The special case of tensoring an associative central division algebra and a nonassociative cyclic division algebra appeared already when constructing space time block codes
 \cite{PU11, P13, PS14, MO13, P16}.
The resulting algebra is a generalized nonassociative cyclic algebra (cf. \cite[p. 36]{J96} for the associative setup, where $D$ is assumed to be a division algebra, but the proof goes through verbatim if not):

Let $E/F$ be a cyclic field extension of degree $m$  with ${\rm Gal}(E/F)=\langle \tau\rangle$.
Let $D_0$ be a central simple algebra over $F$ of degree $n$, and put $D=D_0\otimes_FE$. 
 Let  $D_1=(E/F,\tau,d)$ be a nonassociative cyclic algebra over $F$ of degree $m$  (i.e. $d\in E^\times$), and let $\widetilde{\tau}$ be the unique extension of $\tau$ to $D$ such that $\widetilde{\tau}|_{D_0}=id_{D_0}$.  Then
$$A=D_0\otimes_F (E/F,\tau,d)\cong (D_0\otimes_F E)[t,\widetilde{\tau}]/(D_0\otimes_F E)[t,\widetilde{\tau}](t^m-d)=(D,\widetilde{\tau},d)$$
\cite{P.16} and is a nonassociative generalized cyclic algebra over $F$ of degree $mn$. Now $(E/F,\tau,d)$ is associative if and only if $d\in F^\times$, so assume that $d\in E\setminus F$. Then ${\rm Nuc}((D,\widetilde{\tau},d))=D_0\otimes_F E=D$ is a normal algebra over $F$
and $D_0\otimes_F (E/F,\tau,d)$ is a semiassociative algebra over $F$ of degree $mn$ that  is not semicentral.

For any maximal \'etale subalgebra $L$ in $D_0$, $K=L\otimes_{F} E \subset {\rm Nuc}(A)$  is a maximal \'etale subalgebra of $A$ of degree $mn$ over $F$.

\begin{proposition}
Let $H=\{\gamma\in {\rm Gal}(E/F)\,|\,\gamma(d)=d \}=\langle \tau^s\rangle$ for some integer $s$ such that $m=sr$.
\\ (i) Let
$M={\rm Fix}(\tau^s)$. Then
 $${\rm Nuc}_r((D,\widetilde{\tau},d))=D_0\otimes_F (K/M,\sigma^s, d)\cong D\oplus Dt^s\oplus \dots\oplus Dt^{s(r-1)}$$
 with $(K/M,\sigma^s, d)$ a cyclic algebra of degree $r$ over $M$,  where $[M:F]=|H|$. If $m$ is prime then ${\rm Nuc}_r((D,\widetilde{\tau},d))=D$.
 \\ (ii)  If $1<s<m$, then
$(D,\widetilde{\tau},d)^{op}$ is not a generalized nonassociative cyclic algebra.
 \end{proposition}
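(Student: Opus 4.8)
The plan is to derive both parts from Theorem \ref{thm:7} and Corollary \ref{cor:8}, generalized to allow $D=D_0\otimes_F E$ to have zero divisors; the point that replaces the division hypothesis is the already-known value ${\rm Nuc}((D,\widetilde{\tau},d))=D$.

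For (i), first observe that $A=(D,\widetilde{\tau},d)$ fits the setup of Section \ref{sec:nonasscyclic} with $B=D$ central simple over $C=E$ and $\sigma=\widetilde{\tau}$, since $\widetilde{\tau}|_E=\tau$ has order $m$ and ${\rm Fix}(\widetilde{\tau})\cap E=F$. I would then reprove the right-nucleus computation of Theorem \ref{thm:7} without using that $D$ is a division algebra, exactly as the proof of Theorem \ref{thm:nuc} does for $K$: working from the eigenspace description ${\rm Nuc}_r(S_f)=\{g\in R\,|\,\deg(g)<m,\ fg\in Rf\}$ of Section \ref{sec:2}, the arguments adapting \cite[Prop.~3.2.3]{S13} and \cite[Thm.~5.1]{S12} give ${\rm Nuc}_r(A)=D\oplus Dt^s\oplus\dots\oplus Dt^{(r-1)s}$ as a left $D$-module, where $H=\langle\tau^s\rangle$ and $m=sr$. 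Writing $u=t^s$, one has $ux=\widetilde{\tau}^s(x)u$ for $x\in D$ and $u^r=t^m=d$, so this subalgebra is the generalized cyclic algebra $(D,\widetilde{\tau}^s,d)$; since $\widetilde{\tau}^s$ is trivial on $D_0$ and equals $\tau^s$ on $E$ with $d\in M={\rm Fix}(\tau^s)$, the same computation as the displayed isomorphism preceding the proposition (now for $\widetilde{\tau}^s$ and the cyclic extension $E/M$) identifies it with $D_0\otimes_F (E/M,\tau^s,d)$, where $(E/M,\tau^s,d)$ is cyclic of degree $r$ over $M$, with $[E:M]=|H|=r$ and $[M:F]=m/r=s$ by the Galois correspondence. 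For prime $m$ and $d\in E\setminus F$ one has $H\subsetneq{\rm Gal}(E/F)$, forcing $H=1$, $r=1$, hence ${\rm Nuc}_r(A)=D$.

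For (ii), I would argue by bookkeeping of the three nuclei under the opposite. Passing to $A^{op}$ interchanges the left and right nucleus and fixes the middle nucleus, so ${\rm Nuc}_l(A^{op})={\rm Nuc}_r(A)$ and ${\rm Nuc}_m(A^{op})={\rm Nuc}_m(A)$. As $A$ is a proper generalized cyclic algebra, ${\rm Nuc}_l(A)={\rm Nuc}_m(A)$ by Section \ref{sec:nonasscyclic}. Now suppose for contradiction that $A^{op}\cong(D',\sigma',d')$ is a generalized nonassociative cyclic algebra; it is proper, since $A$ and hence $A^{op}$ is not associative, so it too satisfies ${\rm Nuc}_l(A^{op})={\rm Nuc}_m(A^{op})$. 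Translating through the opposite relations yields ${\rm Nuc}_r(A)={\rm Nuc}_m(A)$, and together with ${\rm Nuc}_l(A)={\rm Nuc}_m(A)$ this forces all three nuclei of $A$ to coincide, so each equals the full nucleus ${\rm Nuc}(A)=D$. In particular ${\rm Nuc}_r(A)=D$, contradicting part (i): since $1<s<m$ gives $r>1$, the module $D\oplus Dt^s\oplus\dots\oplus Dt^{(r-1)s}$ has $F$-dimension $r\dim_F D>\dim_F D$.

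The main obstacle is part (i): one must check that the computations of \cite{S12,S13}, originally carried out for $K$ a field (equivalently $D$ a division ring), survive the passage to $D=D_0\otimes_F E$ with zero divisors, which is why I would run them directly from the eigenspace characterization of ${\rm Nuc}_r$ rather than from any division-specific step. Once (i) is available, part (ii) is essentially formal; the only thing to watch there is that, because $D_0\otimes_F E$ may fail to be a division algebra, one cannot invoke ${\rm Nuc}_l((D',\sigma',d'))=D'$ as in Corollary \ref{cor:8}, so the argument is routed instead through the unconditional identity ${\rm Nuc}_l={\rm Nuc}_m$ for proper generalized cyclic algebras together with the known value of ${\rm Nuc}(A)$.
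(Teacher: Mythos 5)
Your proof is correct, but on both halves it takes a genuinely different route from the paper's, and in each case yours is the more careful one. For (i) the paper does not recompute anything inside $D[t;\widetilde{\tau}]$: it invokes the identity ${\rm Nuc}_r(D_0\otimes_F B)=D_0\otimes_F{\rm Nuc}_r(B)$ for the associative central simple factor $D_0$ and then quotes Theorem \ref{thm:nuc} to get ${\rm Nuc}_r((E/F,\tau,d))=(E/M,\tau^s,d)$, so the entire computation is pushed down to the commutative tensor factor; your direct eigenspace computation in the big skew polynomial ring reaches the same module $D\oplus Dt^s\oplus\dots\oplus Dt^{(r-1)s}$ without that tensor identity. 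The one point you leave implicit --- and which is the actual substitute for the division hypothesis of Theorem \ref{thm:7}, rather than the value of ${\rm Nuc}(A)$ --- is that for $g=\sum_i a_it^i$ one gets $fg\equiv\sum_i a_i\bigl(\tau^i(d)-d\bigr)t^i \pmod{Rf}$ with $\tau^i(d)-d$ lying in the central subfield $E$, hence invertible whenever nonzero; this is what forces $a_i=0$ for $s\nmid i$ even though $D$ has zero divisors. For (ii) the paper's proof is one line (``compare the left nuclei,'' i.e.\ the argument of Corollary \ref{cor:8}), which, as you correctly observe, rests on ${\rm Nuc}_l((D',\sigma',d'))=D'$ and hence implicitly on a division hypothesis for the putative $D'$; your detour through the unconditional identity ${\rm Nuc}_l={\rm Nuc}_m$ for proper generalized cyclic algebras, transported through the opposite and played off against the dimension count from (i), removes that dependence and is a genuine improvement. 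One bookkeeping remark: your Galois correspondence gives $[E:M]=|H|=r$ and $[M:F]=s$, which is the correct count and matches Theorem \ref{thm:nuc}; the statement's ``$[M:F]=|H|$'' is a slip, so do not try to force your numbers to agree with it.
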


\begin{proof}
(i) We have  ${\rm Nuc}_r((D,\widetilde{\tau},d))=D_0\otimes_F {\rm Nuc}_r((E/F,\tau,d))$, where
${\rm Nuc}_r((E/F,\tau,d))$ is the cyclic subalgebra $(E/M,\tau^s, d)$ of degree $r$ over $M={\rm Fix}(\tau^s)$.
Here, ${\rm Nuc}_r((K/F,\tau, d))=(K/M,\sigma^s, d)$
 is a cyclic algebra of degree $r$ over $M={\rm Fix}(\tau^s)$, and $[M:F]=|H|$.
 In particular, if $m$ is prime then ${\rm Nuc}_r((K/F,\tau, d))=D_0\otimes_FE=D$.
 \\ (ii) The proof is straightforward, we just compare the left nuclei. Alternatively, this is a consequence of Theorem \ref{thm:7}, respectively, Corollary \ref{cor:8}.
\end{proof}

\begin{remark}
(i) Let $K$ be an \'etale algebra of dimension $n$ over $F$. Note that the associative algebra $D_0\otimes_F (K/F,\sigma, 0)$ also is a semiassociative algebra over $F_0$. It is not simple, the semisimple quotient is $D$.
 Abusing notation we denote it by $(D_0\otimes_F K,\sigma, 0)$ (cf. \cite[Remark 3.8]{BHMRV} for $n=2$).
 \\ (ii) The nucleus of $(D,\widetilde{\tau},d) \otimes_F(D,\widetilde{\tau},d)^{op}$ is $M_{n^2m^2}(K)$, so this algebra is not split.
 \end{remark}

 In the following, let $L/F$ be a cyclic Galois field extension of degree $n$ with ${\rm Gal}(L/F)=\langle \sigma\rangle$.
Let $K=L\otimes_{F} E$, then $\sigma$ and $\tau$ canonically extend to $K$.

Let now $D_0=(L/F,\sigma,c)$ be an associative cyclic algebra over $F$ and $D_1=(E/F,\tau,d)$
be a nonassociative cyclic algebra over $F$, i.e. $c\in F^\times$.
Then $D=(L/F,\sigma,c)\otimes_{F} E$ is a central simple algebra over $E$ of degree $n$ and $K/E$ is a maximal \'etale subalgebra of $D$ of degree $n$ (i.e., $D\cong(K/E,\sigma,c)$ using our new generalized definition of a cyclic algebra). Then
$$A=(L/F,\sigma,c)\otimes_{F} (E/F,\tau,d)$$
is a semiassociative algebra over $F$ of degree $mn$,  and if $d\in E\setminus F$ then ${\rm Nuc}(A)=D_0\otimes_F E=D$ and $K=L\otimes_{F} E \subset {\rm Nuc}(A)$  is a maximal commutative subalgebra of $A$ that is an \'etale algebra of degree $mn$ over $F$ (if $d\in F^\times$ then $(E/F,\tau,d)$ is associative).

In this case $\widetilde{\tau}$ is the unique $L$-linear automorphism of $D$ such that $\widetilde{\tau}|_K=\tau$, i.e.
 $\widetilde{\tau}(x)=\tau(x_0) +  \tau(x_1)t + \tau(x_2)t^2 +\dots + \tau(x_{n-1})t^{n-1}$
  for $x= x_0 + x_1t + x_2t^2 +\dots + x_{n-1}t^{n-1}\in D$ ($x_i\in K$, $1\leq i\leq n$).
  So here $\widetilde{\tau}|_E$ has order $m$ and ${\rm Fix}(\widetilde{\tau})=F$.

 \begin{corollary} \label{main}
The algebra
$(L/F,\sigma,c)\otimes_{F} (E/F,\tau,d)\cong (D,\widetilde{\tau},d)$
is a generalized nonassociative cyclic algebra over $F$ if $d\in F^\times$. It is associative if $d\in F$, and has nucleus $D$ if $d\in E\setminus F$.
It is $K$-semiassociative of degree $mn$ and, if it is not associative, it is not semicentral.
\end{corollary}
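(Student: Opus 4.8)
The entire statement is a specialization of the general framework built up in Sections \ref{sec:nonasscyclic} and \ref{sec:tensor}, so the plan is to fix $D_0=(L/F,\sigma,c)$ and read off each clause from results already available with $B=D$. Since $c\in F^\times$, the algebra $D_0=(L/F,\sigma,c)$ is an associative central simple $F$-algebra of degree $n$; hence $D=D_0\otimes_F E$ is a central simple $E$-algebra of degree $n$ (a scalar extension of an $F$-central simple algebra), with $D\cong(K/E,\sigma,c)$ and maximal \'etale subalgebra $K=L\otimes_F E$. For this $D_0$ the general isomorphism quoted at the start of Section \ref{sec:tensor} (from \cite{P.16}) gives $A=(L/F,\sigma,c)\otimes_F(E/F,\tau,d)\cong(D,\widetilde{\tau},d)$, where $\widetilde{\tau}$ is the unique extension of $\tau$ fixing $D_0$. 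By the Definition in Section \ref{sec:nonasscyclic} this already exhibits $A$ as the generalized cyclic algebra $(D,\widetilde{\tau},d)$ of degree $mn$ over $F={\rm Fix}(\widetilde{\tau})\cap E$ for every $d\in E^\times$, so the first clause is immediate.

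For associativity I would invoke the criterion recorded in Section \ref{sec:nonasscyclic}, namely that $(B,\sigma,d)$ is associative if and only if $d\in F$. With $B=D$, $C=E$ and ${\rm Fix}(\widetilde{\tau})\cap E=F$, this says $(D,\widetilde{\tau},d)$ is associative exactly when $d\in F$; in that case it is literally the tensor product of the two associative central simple algebras $(L/F,\sigma,c)$ and $(E/F,\tau,d)$, hence associative central simple of degree $mn$, while for $d\in E\setminus F$ it is proper.

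The $K$-semiassociativity and the failure of semicentrality I would take directly from part (ii) of the Lemma in Section \ref{sec:nonasscyclic} applied to $B=D$, $C=E$: here $K=L\otimes_F E$ is an \'etale $F$-algebra of dimension $mn$ lying in ${\rm Nuc}(A)$, and since $\dim_F A=(mn)^2=(\dim_F K)^2$, the faithful-plus-cyclic-plus-dimension criterion (\cite[Remark 3.3]{BHMRV}, cited in Section \ref{subsec:na}) makes $A$ $K$-semiassociative of degree $mn$. For $d\in E\setminus F$ the nucleus contains the $E$-central simple algebra $D$, whose single atom has center $E\neq F$, so not every atom is $F$-central and $A$ is not semicentral, exactly as in the nonassociative cyclic case; this step does not need the precise value of the nucleus.

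The one clause that is not a verbatim quotation is ${\rm Nuc}(A)=D$ for $d\in E\setminus F$, and this is where I expect the real work. The general results only yield the inclusions $D\subseteq{\rm Nuc}_l(A)={\rm Nuc}_m(A)$ and $D\subseteq{\rm Nuc}_r(A)$, with equality on the left and middle guaranteed only when $D$ is a division algebra, whereas $D=(L/F,\sigma,c)\otimes_F E$ typically has zero divisors (compare Section \ref{sec:2}, where for non-division $S$ the left and middle nuclei of a Petit algebra may be strictly larger). Rather than quote Theorem \ref{thm:7}, I would exploit the tensor decomposition $A=D_0\otimes_F(E/F,\tau,d)$: since $D_0$ is associative central simple and, by Theorem \ref{thm:nuc}, ${\rm Nuc}_l((E/F,\tau,d))={\rm Nuc}_m((E/F,\tau,d))=E$, a direct associator computation on elementary tensors should give ${\rm Nuc}_l(A)={\rm Nuc}_m(A)=D_0\otimes_F E=D$. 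Combined with $D\subseteq{\rm Nuc}_r(A)$ this forces ${\rm Nuc}(A)={\rm Nuc}_l(A)\cap{\rm Nuc}_m(A)\cap{\rm Nuc}_r(A)=D$, independently of whether the stabilizer of $d$ (which by the Proposition preceding Corollary \ref{main} only enlarges the right nucleus) is trivial. Verifying this left/middle-nucleus identity in the non-division setting is the crux of the argument, and is where dropping the division-algebra hypothesis must be checked rather than cited.
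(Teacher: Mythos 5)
Your proposal is correct, and for the one nontrivial clause it takes a genuinely different route from the paper. The paper supplies no proof environment for Corollary \ref{main}: its justification is the Remark that follows, which asserts that the proof of \cite[Theorem 11]{P16} (given there under the hypotheses that $D$ is a division algebra and that $L$ and $E$ are linearly disjoint, so that $K$ is a field) goes through verbatim in the present setting, with $\widetilde{\tau}^{-1}$ replaced by $\widetilde{\tau}$ because of the opposite convention for cyclic algebras; in particular ${\rm Nuc}(A)=D$ for $d\in E\setminus F$ is simply carried over from that source. You instead assemble the easy clauses from the paper's own general results (the associativity criterion for $(B,\sigma,d)$, the Lemma on $K$-semiassociativity via \cite[Remark 3.3]{BHMRV}), and, more importantly, you correctly isolate where the division-algebra hypothesis actually matters: the Petit-algebra theory only gives the inclusions $D\subseteq{\rm Nuc}_l(A)={\rm Nuc}_m(A)$ and $D\subseteq{\rm Nuc}_r(A)$ when $D$ has zero divisors. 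Your fix -- computing ${\rm Nuc}_l$ and ${\rm Nuc}_m$ of $A=D_0\otimes_F(E/F,\tau,d)$ as $D_0\otimes_F{\rm Nuc}_l((E/F,\tau,d))=D_0\otimes_FE=D$ using that $D_0$ is associative central simple and that associators are trilinear (so elementary tensors with linearly independent $b_i$ suffice), then intersecting with ${\rm Nuc}_r(A)\supseteq D$ -- is sound and makes explicit what the paper leaves to a citation. Two minor remarks: the ``$d\in F^\times$'' in the statement must be read as $d\in E^\times$, as you implicitly do; and your comment that non-semicentrality ``does not need the precise value of the nucleus'' is slightly too quick, since the atoms are the simple components of ${\rm Nuc}(A)/J({\rm Nuc}(A))$ and so one does need to know that this quotient has an $E$-central component -- but your nucleus computation supplies exactly that, so nothing is missing.
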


\begin{remark}
The proof of this result  is a straightforward generalization of the proof \cite[Theorem 11]{P16} which assumed that $D$ is a division algebra, and that $L$ and $E$ are linearly disjoint over $F$, so that  $K$ is a field: the proof that
 $(L/F,\sigma,c)\otimes_{F} (E/F,\tau,d)\cong (D,\widetilde{\tau},d) D[t;\widetilde{\tau}]/ D[t;\widetilde{\tau}](t^m-d)$
 goes through verbatim in our more general setting, as the whole theory does not depend on these two assumptions
 (it was originally developed for space-time block codes which are built from division algebras). Since we look at the opposite cyclic algebras than the one employed throughout \cite[Theorem 11]{P16}, $\widetilde{\tau}^{-1}$ in \cite[Theorem 11]{P16} in our setup becomes $\widetilde{\tau}$. If $d\in E\setminus F$ the algebra has as the nucleus the central simple algebra $D=(L/F,\sigma,c)\otimes_{F} E$, and $K/F$ is a maximal \'etale subalgebra of the nucleus of degree $mn$.
\end{remark}

Let $(E/F,\tau,d)$ be a cyclic associative division algebra of prime degree $m$. Suppose that $B_0$
 is a central associative algebra over $F$ such that
$B=B_0\otimes_{F}E$ is a division algebra. By a classical result by Jacobson, the tensor product $B_0\otimes_{F} (E/F,\tau,d)$
is a division algebra if and only if
$d\not=\widetilde{\tau}^{m-1}(z)\cdots\widetilde{\tau}(z)z$ for all $z\in B$ (\cite[Theorem 1.9.8]{J96}, see also \cite[Theorem 12, Ch. XI]{A}).

This result can be generalized to the tensor product of a cyclic and a nonassociative cyclic algebra, if the base field contains a suitable root of unity \cite{P16}.
 We now put the main results from  \cite{P16} into the context of semiassociative algebras, adjusting them where needed (some of the algebras studied in \cite[Section 3]{P16}  are the opposite algebras of ours).

The generalization of Jacobson's condition is a necessary condition for $d\in E^\times$ in our general nonassociative case as well:

\begin{proposition} \cite[Proposition 20]{P16}
Let $D_0=(L/F,\sigma,c)$ be an associative cyclic  algebra of degree $n$ over $F$,
such that $D=D_0\otimes_{F}E$ is a division algebra.
 If $D_0\otimes_{F} (E/F,\tau,d)$ is a division algebra then
 $d\not=z\widetilde{\tau}(z)\cdots \widetilde{\tau}^{m-1}(z)$
 for all $z\in D$.
\end{proposition}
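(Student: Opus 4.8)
The plan is to prove the contrapositive in its sharpest form: instead of exhibiting a zero divisor, I would show that for $d\in E\setminus F$ the equation $d=z\widetilde{\tau}(z)\cdots\widetilde{\tau}^{m-1}(z)$ has \emph{no} solution $z\in D$ whatsoever, so that the asserted inequality holds a fortiori, whether or not $A=(D,\widetilde{\tau},d)$ is a division algebra. I would first record the structural facts already established for $A\cong(L/F,\sigma,c)\otimes_F(E/F,\tau,d)$: here $D=D_0\otimes_F E$ is central simple over $E$, so $Z(D)=E$; and $\widetilde{\tau}=\mathrm{id}_{D_0}\otimes\tau$ is an $F$-algebra automorphism of $D$ with $\widetilde{\tau}|_E=\tau$, $\widetilde{\tau}^m=\mathrm{id}_D$ (as $\tau$ has order $m$), and $\mathrm{Fix}(\tau)=F$ (since $\tau$ generates $\mathrm{Gal}(E/F)$). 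The case $d\in F^\times$ is the associative one, where $A$ is central simple over $F$ and the division question is governed by the classical criterion of Jacobson recalled above; the genuinely new content is the case $d\in E\setminus F$, which I would treat directly.

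So assume $d\in E\setminus F$ and, for contradiction, that some $z\in D$ satisfies $d=z\widetilde{\tau}(z)\cdots\widetilde{\tau}^{m-1}(z)$. Since $d\neq0$ and $D$ is a finite-dimensional algebra without zero divisors, $z$ is invertible. Writing $Q=\widetilde{\tau}(z)\cdots\widetilde{\tau}^{m-1}(z)$, the hypothesis is $zQ=d$. Applying the ring automorphism $\widetilde{\tau}$ and using multiplicativity gives $\widetilde{\tau}(d)=\widetilde{\tau}(z)\widetilde{\tau}^2(z)\cdots\widetilde{\tau}^{m}(z)$, and $\widetilde{\tau}^m=\mathrm{id}$ rewrites the right-hand side as the cyclically shifted product $\widetilde{\tau}(z)\cdots\widetilde{\tau}^{m-1}(z)\,z=Qz$. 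Hence $\widetilde{\tau}(d)=Qz$.

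The final step undoes the shift using centrality. From $zQ=d$ I obtain $Q=z^{-1}d$, so $\widetilde{\tau}(d)=Qz=z^{-1}dz$; and because $d\in E=Z(D)$ is central, $z^{-1}dz=d$. Therefore $\widetilde{\tau}(d)=d$, i.e. $\tau(d)=d$, which forces $d\in\mathrm{Fix}(\tau)=F$, contradicting $d\in E\setminus F$. Thus no such $z$ exists and $d\neq z\widetilde{\tau}(z)\cdots\widetilde{\tau}^{m-1}(z)$ for every $z\in D$, which is exactly the assertion (and more).

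I expect the only delicate point to be the noncommutative bookkeeping in the middle step: because the $m$-fold product is not symmetric, applying $\widetilde{\tau}$ does not fix it but permutes its factors cyclically, and it is precisely the centrality of $d$ in $D$, combined with $\widetilde{\tau}^m=\mathrm{id}$ and the invertibility of $z$, that lets me conjugate the shifted product back to $d$. I would also note that the same kind of computation, applied to the reversed product $\widetilde{\tau}^{m-1}(z)\cdots\widetilde{\tau}(z)z$, rules out any linear right factor $t-z$ of $t^m-d$ when $d\in E\setminus F$, in line with the irreducibility criteria recalled in Section~\ref{sec:2}; this is why, in the proper nonassociative range $d\in E\setminus F$, the necessary condition holds unconditionally.
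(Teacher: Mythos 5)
Your computation for the main case $d\in E\setminus F$ is correct, and it takes a genuinely different route from the source: the paper simply cites \cite[Proposition 20]{P16}, whose proof runs through the contrapositive in the expected way --- if $d=z\widetilde{\tau}(z)\cdots\widetilde{\tau}^{m-1}(z)$ then $t^m-d$ acquires a linear factor in $D[t;\widetilde{\tau}]$, so $S_f$ contains zero divisors and $A$ is not a division algebra. You instead show the conclusion is \emph{vacuously} true in the proper nonassociative range: applying $\widetilde{\tau}$ cyclically shifts the product, and since $d\in E=Z(D)$ the shifted product $Qz=z^{-1}dz$ collapses back to $d$, forcing $\tau(d)=d$ and hence $d\in F$. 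The cyclic-shift bookkeeping, the use of $\widetilde{\tau}^m=\mathrm{id}_D$, and the centrality step are all sound, and the argument is the exact noncommutative analogue of the observation that $N_{K/F}(z)\in F$ can never equal $d\in K\setminus F$ for prime-degree nonassociative cyclic algebras. What your approach buys is strictly more than the proposition asserts: the necessary condition carries no information whatsoever when $d\in E\setminus F$, so that (read together with Theorem \ref{main2}(i)) the tensor product would be a division algebra for \emph{every} $d\in E\setminus F$ under those hypotheses --- consistent with Theorem \ref{biquat}, which is indeed unconditional for $m=2$, but making parts (ii) and (iii) of Theorem \ref{main2} special cases rather than independent criteria. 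That is a noteworthy consequence worth stating explicitly rather than leaving implicit.

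The one genuine loose end is the associative case $d\in F^\times$, which the proposition's hypothesis $d\in E^\times$ does not exclude and which you dispose of by ``deferring to Jacobson.'' As quoted in the paper, Jacobson's criterion is stated with the \emph{reversed} product $\widetilde{\tau}^{m-1}(z)\cdots\widetilde{\tau}(z)z$, assumes $m$ prime, and assumes $(E/F,\tau,d)$ is a division algebra; in a noncommutative $D$ the two orderings are not interchangeable by any substitution $z\mapsto\widetilde{\tau}^k(z)$ (your own cyclic-shift identity only permutes factors cyclically, it does not reverse them), so the deferral does not literally cover the condition as written. The uniform factorization argument --- $d=z\widetilde{\tau}(z)\cdots\widetilde{\tau}^{m-1}(z)$ implies $t^m-d$ is reducible, hence $S_f$ has zero divisors (Section \ref{sec:2}) --- handles all $d\in E^\times$ at once and avoids the case split; you should either supply that for $d\in F^\times$ or restrict your claim to the proper nonassociative case, where your proof stands on its own.
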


From now on, let $L$ and $E$ be linearly disjoint over $F$, then $K=L\otimes_{F} E=L\cdot E$ is the composite of $L$ and $E$ over $F$ with Galois group
${\rm Gal}(K/F)=\langle \sigma\rangle\times \langle \tau\rangle$, and $K/E$ is a maximal separable subfield of $D=(L/F,\sigma,c)\otimes_{F} E$ of degree $n$.

\begin{theorem} (\cite[Theorems 14, 15, 16]{P16})\label{main2}
Suppose that $D=(L/F,\sigma,c)\otimes_{F} E$ is a division algebra, $m$ is prime and in case $m\not=2,3$, additionally that $F$ contains a primitive $m$th root of unity.
\\ (i) $(L/F,\sigma,c)\otimes_{F} (E/F,\tau,d)$ is a semiassociative division algebra  if and only if
$$d\not=z\widetilde{\tau}(z)\cdots \widetilde{\tau}^{m-1}(z)$$
 for all $z\in D$, if and only if $t^m-d\in D[t;\widetilde{\tau}]$ is irreducible.
\\ (ii) If $\tau(d^n)\not=d^n$ then $(L/F,\sigma,c)\otimes_{F} (E/F,\tau,d)$ is a  $K$-semiassociative division algebra of degree $mn$ with nucleus $D$.
\\ (iii) If $d\in E$ such that $d^n\not\in N_{D/F}(D^\times)$, then $(L/F,\sigma,c)\otimes_{F} (E/F,\tau,d)$
 is a  $K$-semiassociative division algebra  of degree $mn$ with nucleus $D$. In particular, for all $d\in E\setminus F$ with $d^n\not\in F$, $(L/F,\sigma,c)\otimes_{F} (E/F,\tau,d)$ is a $K$-semiassociative division algebra of degree $mn$.
\end{theorem}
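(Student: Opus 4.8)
The plan is to reduce everything to the irreducibility of $t^m-d$ in the skew polynomial ring $D[t;\widetilde{\tau}]$, and then to extract the arithmetic conditions in (ii) and (iii) by pushing a hypothetical factorization down to the centre via the reduced norm.

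First I would invoke the identification $A=(L/F,\sigma,c)\otimes_F(E/F,\tau,d)\cong(D,\widetilde{\tau},d)=D[t;\widetilde{\tau}]/D[t;\widetilde{\tau}](t^m-d)$ established above, together with the fact (recorded in the discussion preceding the theorem) that $A$ is always semiassociative of degree $mn$, with nucleus $D$ once $d\in E\setminus F$. Since $D=(L/F,\sigma,c)\otimes_F E$ is by hypothesis a division algebra, the Petit criterion \cite[(7)]{P66} applies: $A$ is a division algebra if and only if $t^m-d$ is irreducible in $D[t;\widetilde{\tau}]$. Because $m$ is prime and, for $m\neq 2,3$, $F$ contains a primitive $m$-th root of unity, the irreducibility of $t^m-d$ is equivalent to $d\neq z\widetilde{\tau}(z)\cdots\widetilde{\tau}^{m-1}(z)$ for all $z\in D$ by \cite[Theorem 3.11]{CB}, the cases $m=2,3$ being covered by the explicit criteria of Section \ref{sec:2} (stated in the opposite convention appropriate to our algebras, under which the $\widetilde{\tau}^{-1}$ of \cite{P16} becomes $\widetilde{\tau}$). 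Chaining these two equivalences proves (i).

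For (ii) and (iii) I would argue by contraposition. Suppose $A$ is not a division algebra; then by (i) there is some $z\in D^\times$ with $d=z\widetilde{\tau}(z)\cdots\widetilde{\tau}^{m-1}(z)$. I apply the reduced norm $N_{D/E}\colon D\to E$. It is multiplicative, and since $\widetilde{\tau}$ is a ring automorphism of $D$ inducing $\tau$ on the centre $E$ it satisfies $N_{D/E}(\widetilde{\tau}^i(z))=\tau^i(N_{D/E}(z))$; as $E$ is commutative the order of the factors is immaterial. Writing $w=N_{D/E}(z)\in E^\times$ and using that the reduced norm of the scalar $d\in E$ is $N_{D/E}(d)=d^n$, I obtain
$$d^n=w\,\tau(w)\cdots\tau^{m-1}(w)=N_{E/F}(w)=N_{D/F}(z)\in F.$$
Thus a nontrivial factorization forces $d^n\in N_{D/F}(D^\times)\subseteq F$. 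The contrapositive immediately yields (iii): if $d^n\notin N_{D/F}(D^\times)$ then $t^m-d$ is irreducible and $A$ is a division algebra, and since $N_{D/F}(D^\times)\subseteq F$ the condition $d^n\notin F$ (which in particular forces $d\in E\setminus F$) already suffices. Finally (ii) is the same statement read off the centre: because $\mathrm{Fix}(\tau)=F$ in $E$, the hypothesis $\tau(d^n)\neq d^n$ is exactly $d^n\notin F$. In each case the remaining assertions—$K$-semiassociativity, degree $mn$, and nucleus $D$—are supplied by the lead-up material once $d\in E\setminus F$ is known.

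The one delicate point I would treat most carefully is the behaviour of the reduced norm under the $\tau$-semilinear automorphism $\widetilde{\tau}$, namely the identity $N_{D/E}\circ\widetilde{\tau}=\tau\circ N_{D/E}$, together with the bookkeeping of the opposite-algebra convention so that the product $z\widetilde{\tau}(z)\cdots\widetilde{\tau}^{m-1}(z)$ in the factorization criterion is the correct one. Fortunately the norm computation driving (ii) and (iii) is insensitive to the order of the factors, so this subtlety bears only on the precise formulation of (i) and not on the arithmetic obstructions; the genuine content of the theorem is the single display above, which converts any zero divisor into a norm relation on $d^n$.
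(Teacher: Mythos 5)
Your argument is correct and is essentially the proof of the cited source: the paper does not reprove Theorem \ref{main2} but refers to \cite[Theorems 14, 15, 16]{P16}, where (i) is exactly the combination of Petit's criterion (division algebra iff $t^m-d$ irreducible, valid because $D$ is a division ring) with the irreducibility criterion for $t^m-d$ over $D[t;\widetilde{\tau}]$ for prime $m$, and (ii), (iii) follow by the same reduced-norm contrapositive $d=z\widetilde{\tau}(z)\cdots\widetilde{\tau}^{m-1}(z)\Rightarrow d^n=N_{E/F}(N_{D/E}(z))=N_{D/F}(z)\in F$. The only points to watch are the ones you already flag: the order of the factors in the product (an artefact of the opposite-algebra convention, which the norm computation does not see) and the fact that the nucleus equals $D$ only once $d\in E\setminus F$ is known, which is automatic in (ii) and in the second assertion of (iii) but must be read into the hypothesis of the first assertion of (iii).
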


\begin{theorem} \label{biquat} \cite[Theorem 17]{P16}
 Let $F$ be of characteristic not 2.
 Let $(a,c)_{F}$ be a quaternion algebra over $F$ which is a division algebra over $E=F(\sqrt{b})$,
 and $(F(\sqrt{b})/F,\tau,d)$ a nonassociative quaternion algebra over $F$.
 Then
$(a,c)_{F}\otimes_{F} (F(\sqrt{b})/F,\tau,d)$
 is a semiassociative division algebra over $F$ of degree 4 with nucleus $(a,c)_{F(\sqrt{b})}$.
 \end{theorem}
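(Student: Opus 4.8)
The plan is to treat this as the case $m=n=2$ of the general framework of Theorem \ref{main2}, and then to check the division condition by hand, uniformly over all admissible $d$. First I would write the quaternion algebra as a cyclic algebra of degree two, $(a,c)_F=(F(\sqrt a)/F,\sigma,c)$, so that $L=F(\sqrt a)$ and $n=2$, and set $E=F(\sqrt b)$ with $m=2$ and $\tau$ the nontrivial automorphism of $E/F$. With $D_0=(a,c)_F$, the discussion preceding Corollary \ref{main} then identifies
$$A=(a,c)_F\otimes_F(F(\sqrt b)/F,\tau,d)\cong(D,\widetilde{\tau},d),\qquad D=(a,c)_F\otimes_F E=(a,c)_{F(\sqrt b)},$$
where $\widetilde{\tau}=\mathrm{id}_{D_0}\otimes\tau$. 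Since the algebra is nonassociative we have $d\in E\setminus F$, so $\mathrm{Nuc}(A)=D=(a,c)_{F(\sqrt b)}$, and $A$ is $K$-semiassociative of degree $mn=4$ with $K=L\otimes_F E$. Because $\mathrm{char}\,F\neq2$, the primitive-root-of-unity hypothesis in Theorem \ref{main2} is vacuous for $m=2$.

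Next I would use that $D=(a,c)_{F(\sqrt b)}$ is a division algebra by assumption, so that Theorem \ref{main2}(i) applies: $A$ is a division algebra if and only if $t^2-d$ is irreducible in $D[t;\widetilde{\tau}]$, equivalently if and only if $d\neq z\widetilde{\tau}(z)$ for all $z\in D$. It therefore suffices to show that $z\widetilde{\tau}(z)=d$ has no solution whenever $d\in E\setminus F$.

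The decisive computation I would carry out decomposes $D=D_0\oplus D_0\sqrt b$ and writes $z=z_0+z_1\sqrt b$ with $z_0,z_1\in D_0$, so that $\widetilde{\tau}(z)=z_0-z_1\sqrt b$. Since $\sqrt b$ is central in $D$, this gives
$$z\widetilde{\tau}(z)=(z_0^2-b\,z_1^2)+(z_1z_0-z_0z_1)\sqrt b.$$
Writing $d=\alpha+\beta\sqrt b$ with $\alpha,\beta\in F$ and $\beta\neq0$ (as $d\notin F$), comparing the $D_0$- and $D_0\sqrt b$-components turns $z\widetilde{\tau}(z)=d$ into the requirement $z_1z_0-z_0z_1=\beta\cdot1$ in $D_0$. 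The reduced trace of any commutator in a quaternion algebra is zero, while $\mathrm{Trd}_{D_0}(\beta\cdot1)=2\beta\neq0$ since $\mathrm{char}\,F\neq2$; this contradiction shows no such $z$ exists, whence $t^2-d$ is irreducible and $A$ is a division algebra.

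I expect the only genuine step to be this commutator identity together with the vanishing of its reduced trace; the remainder is bookkeeping inherited from Corollary \ref{main} and Theorem \ref{main2}. As a cross-check, parts (ii) and (iii) of Theorem \ref{main2} already dispose of the generic case $\tau(d^2)\neq d^2$, and the trace argument has the advantage of covering uniformly the remaining ``pure'' elements $d=\beta\sqrt b$ as well.
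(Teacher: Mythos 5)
The paper gives no proof of this statement at all---it defers entirely to \cite[Theorem 17]{P16}---but your argument is correct and is essentially the one in that reference: reduce via Theorem \ref{main2}(i) to showing $d\neq z\widetilde{\tau}(z)$ for all $z\in D$, and observe that in the decomposition $D=D_0\oplus D_0\sqrt{b}$ the $\sqrt{b}$-component of $z\widetilde{\tau}(z)$ is the commutator $z_1z_0-z_0z_1$, of reduced trace zero, whereas for $d\notin F$ it would have to equal a nonzero scalar $\beta$ of reduced trace $2\beta\neq 0$ in characteristic not $2$. The only hypothesis of Theorem \ref{main2} you use tacitly is the linear disjointness of $L=F(\sqrt{a})$ and $E=F(\sqrt{b})$, which is automatic here since $E$ would otherwise embed in and hence split $(a,c)_F$, contradicting the assumption that $(a,c)_{F(\sqrt{b})}$ is a division algebra.
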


 More generally, let $B$ be a central simple algebra over $C$ of degree $n$, and $\sigma\in {\rm Aut}(B)$ such that
$\sigma|_{C}$ has finite order $m$, $F={\rm Fix}(\sigma)\cap C$ and $C/F$ is a cyclic Galois field extension of
 degree $m$ with $\mathrm{Gal}(C/F) = \langle \sigma|_{C} \rangle$ and $d\in C^\times$.
 Let $D_0$ be a central simple algebra over $F$ of degree $s$,
 and let $\widetilde{\sigma}$ be the unique extension of $\sigma$ to $D_0\otimes_{F_0} B$ such that $\widetilde{\sigma}|_{D_0}=id_{D_0}$
 Then $\widetilde{\sigma}$ has order $m$ over $F_0$, and
 $$D_0\otimes_{F}(B,\sigma, d)\cong (D_0\otimes B,\widetilde{\sigma}, d)$$
 with
  $(D_0\otimes_{F} B,\widetilde{\sigma}, d)=(D_0\otimes_{F}B)[t;\widetilde{\sigma}]/(D_0\otimes_{F}B)[t;\widetilde{\sigma}](t^m-d)$.
We  get a generalized nonassociative cyclic algebra of degree $mns$
with $D_0\otimes_{F} B$ contained in its nucleus.

 %
 %

\section{The semiassociative Brauer monoid} \label{subsec:structure}

\subsection{}
The classes in $Br^{sa}(F)$ that contain the homogeneous semiassociative algebras are determined by the Brauer group and are of the kind $[B]^{sa}$ with $B$ an associative central simple algebra over $F$. In particular,
if $D$ is an associative $F$-central division algebra, then $[D]^{sa}$ is the unique element of minimal degree in the class $[D]^{sa}\in Br^{sa}(F)$ which contains the homogeneous semiassociative algebras of the kind $D\otimes_F M$, where $M$ is a skew matrix algebra
\cite[Example 14.5, Corollary 18.3]{BHMRV}. Moreover, if $F$ is a field with nontrivial Brauer group, then $Br^{sa}(F)$ has elements $[A]^{sa}$ of infinite order \cite[Corollary 20.4]{BHMRV}. From the proof of \cite[Corollary 20.4]{BHMRV}, it is clear that these elements are constructed by finding semiassociative algebras $A$, such that $\sigma(A)=F\oplus B$, where $B$ is a central division algebra over $F$ of index $p$, so the similarity class $[A]^{sa}$ contains elements that are all semicentral (although the algebras are not explicitly constructed there).

We now collect some observations on elements in the semiassociative Brauer monoid $Br^{sa}(F)$.
In particular,
$Br^{sa}(F)$ can be nontrivial even if the classical Brauer group is trivial, as we can easily conclude from our previous results:

\begin{proposition}\label{prop:order}
(i) Let $F$ be a field that has a cyclic Galois field extension $K/F$ of degree $n$, ${\rm Gal}(K/F)=\langle\sigma\rangle$.  Then $[F]^{sa}\not=[(K/F,\sigma,d)]^{sa}$  for all $d\in K\setminus F$  and so $Br^{sa}(F)$ is nontrivial. Moreover, $[(K/F,\sigma,d)]^{sa}$ has infinite order in $Br^{sa}(F)$,  i.e. the powers of these element are distinct.
\\ (ii) Let $F$ be a field that has a Galois field extension $K/F$ of degree $n$, and  $(K/F, k_0,\dots,k_{m-1})$ be any  Menichetti algebra that is not associative.
Then $[F]^{sa}\not=[(K/F, k_0,\dots,k_{m-1})]^{sa}$  and so $Br^{sa}(F)$ is nontrivial, and $[(K/F, k_0,\dots,k_{m-1})]^{sa}$ has infinite order in $Br^{sa}(F)$.
\end{proposition}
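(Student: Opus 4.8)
The plan is to exploit the fact that Brauer equivalence preserves the isomorphism type of the semisimple quotient $\sigma(A) = \mathrm{Nuc}(A)/J(\mathrm{Nuc}(A))$ up to a shape that is controlled by the skew matrix factors, and in particular that the atoms of an algebra are a Brauer invariant. The key structural input is already in hand: for a proper nonassociative cyclic algebra $A = (K/F,\sigma,d)$ we know from the earlier Proposition that $\mathrm{Nuc}(A) = K$, hence $\sigma(A) = K$ (as $K$ is \'etale, $J(K) = 0$), so the unique atom of $A$ is $K$, which is not $F$-central. The same computation gives the unique atom of any non-associative Menichetti algebra $(K/F,k_0,\dots,k_{m-1})$, since $K \subset \mathrm{Nuc}$ and one checks $\mathrm{Nuc} = K$ so that its atom is again the non-$F$-central field $K$.

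First I would establish that $[A]^{sa} \neq [F]^{sa}$. If these classes were equal, then $A$ would be Brauer equivalent to $F$, i.e.\ there would exist skew matrix algebras with $A \otimes_F M_n(F;c) \cong F \otimes_F M_m(F;c') = M_m(F;c')$. But the right-hand side is split, so $A$ would be homogeneous (indeed split under this equivalence), contradicting that $A$ is not semicentral. More directly: a skew matrix algebra $M_m(F;c')$ is semicentral with all atoms equal to $F$, and tensoring by a skew matrix algebra does not change the set of non-$F$-central atoms (tensoring by $M_n(F;c)$ introduces only split/$F$-central contributions to the nucleus). Since $A$ has an atom equal to $K \neq F$, no such isomorphism can exist, giving $[A]^{sa}\neq[F]^{sa}$ and hence $Br^{sa}(F)$ nontrivial.

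For the infinite order claim, the plan is to track how the nucleus (equivalently the atom $K$) behaves under tensor powers. The tensor product $A^{\otimes k}$ has in its nucleus the tensor power $K^{\otimes_F k}$, and the crucial point is that $\sigma(A^{\otimes k})$ still contains a non-$F$-central \'etale component coming from $K$, so $A^{\otimes k}$ is never homogeneous and in particular $[A^{\otimes k}]^{sa} \neq [F]^{sa}$ for every $k \geq 1$. To separate the powers from each other, I would compare a numerical invariant that is stable under Brauer equivalence but strictly increasing in $k$ --- the natural candidate is the dimension of the \'etale part of $\sigma(A^{\otimes k})$ (i.e.\ the $F$-dimension of the nucleus, or the total degree $n^k$, respectively $m^k$), which changes under tensoring by a fixed algebra but is only altered by skew matrix factors in a controlled multiplicative way. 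The cleanest route is to invoke the splitting criterion: a field extension splits $A^{\otimes k}$ iff it splits its nucleus (Corollary 7.5 of \cite{BHMRV}), so the minimal splitting field of $A^{\otimes k}$ is determined by $K^{\otimes_F k}$, whose splitting behaviour distinguishes distinct $k$.

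The main obstacle I expect is making the separation of powers rigorous, since Brauer equivalence allows multiplication by arbitrary skew matrix algebras on both sides, and one must verify that these factors cannot conspire to identify $[A^{\otimes k}]^{sa}$ with $[A^{\otimes \ell}]^{sa}$ for $k \neq \ell$. The way around this is to pin down a genuinely Brauer-invariant quantity: the isomorphism class of the semisimple atom structure modulo $F$-central split pieces. Because the unique non-split atom of $A^{\otimes k}$ is the \'etale algebra $K^{\otimes_F k}$ (respectively the atom arising from $K$ in the Menichetti case), and these are pairwise non-isomorphic for distinct $k$ as $F$-algebras of strictly increasing dimension, the classes $[A^{\otimes k}]^{sa}$ must be pairwise distinct. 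I would write this up by first citing that tensoring with a skew matrix algebra does not introduce new non-$F$-central atoms and does not merge existing ones, then concluding that the atom $K^{\otimes_F k}$ is a complete Brauer invariant separating the powers.
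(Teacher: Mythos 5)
Your argument that $[A^{\otimes k}]^{sa}\neq[F]^{sa}$ for every $k\geq 1$ is sound, and it takes a genuinely different route from the paper. The paper uses the splitting criterion of \cite[Proposition 7.2]{BHMRV}: the $k$-fold tensor power has nucleus $K\otimes_F\cdots\otimes_F K\cong\prod_{G^{k-1}}K$, a product of only $n^{k-1}$ copies of the field $K$ with $n>1$, so the split \'etale algebra of the full degree cannot sit unitally inside the nucleus and the power is never split. You instead argue through atoms and semicentrality: every atom of $A^{\otimes k}$ is a copy of the non-$F$-central field $K$, and since $\sigma(A\otimes M)\cong\sigma(A)\otimes\sigma(M)$ with $\sigma(M)$ a sum of matrix algebras over $F$, tensoring with skew matrix algebras cannot make all atoms $F$-central. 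Both arguments are correct, and both prove exactly what the paper's proof actually establishes, namely that no power of the class is trivial.

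The gap is in your attempt to separate the powers from one another. You write that ``the unique non-split atom of $A^{\otimes k}$ is the \'etale algebra $K^{\otimes_F k}$'' and that these are pairwise non-isomorphic for distinct $k$. But $K^{\otimes_F k}$ is not an atom: atoms are by definition the simple components of $\sigma(A^{\otimes k})={\rm Nuc}(A^{\otimes k})/J({\rm Nuc}(A^{\otimes k}))$, and $K^{\otimes_F k}\cong\prod_{G^{k-1}}K$ is not simple for $k\geq 2$ --- its simple components are $n^{k-1}$ copies of the \emph{same} field $K$. Hence the set of atoms up to isomorphism is $\{K\}$ for every $k\geq 1$; only the multiplicity $n^{k-1}$ varies with $k$, and multiplicity is exactly what tensoring with a skew matrix algebra also changes, since $\sigma(M_N(F;c))\cong M_{N_1}(F)\oplus\dots\oplus M_{N_r}(F)$ with $r$ depending on $c$. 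For the same reason the $F$-dimension of $\sigma(\cdot)$ is multiplied, not preserved, under Brauer equivalence, and your fallback via splitting fields fails as well: $K$ splits $K^{\otimes_F k}$ (indeed $K^{\otimes_F k}\otimes_F K\cong K^{n^k}$), so the minimal splitting fields do not distinguish the powers either. So your proposed invariant does not close the argument. To be fair, the paper's own proof also only shows $[A^{\otimes k}]^{sa}\neq[F]^{sa}$ for all $k$ --- which is the sense of ``infinite order'' used in \cite{BHMRV} --- and does not supply an invariant separating $[A^{\otimes k}]^{sa}$ from $[A^{\otimes \ell}]^{sa}$ for $k\neq\ell$; if you want the literal claim that all powers are distinct, a finer invariant than anything in either argument is needed.
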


\begin{proof}
 A semiassociative algebra over $F$ of degree $kn$ is split if and only if $F^{kn}$
is contained in its nucleus as a unital subalgebra.
\\ (i) Now $A=(K/F,\sigma, d)\otimes_F\dots \otimes_F (K/F,\sigma, d)$ ($k$-times) has degree
$kn$ and nucleus $K\otimes_F K\otimes_F \dots \otimes_F K$ ($k$-times). If $K/F$ is a cyclic field extension of degree $n$ with Galois group $G$ then
 $K\otimes_F K\otimes_F \dots \otimes_F K\cong \prod_{G^{k-1}} K$, where the index set $G^{k-1}$ is the $(k-1)$-fold product of $G$.
   So clearly the \'etale algebra $F^{nk-n}$ is a unital subalgebra of the nucleus of $A$, but $F^{kn}$ is not.
   \\ (ii) $(K/F, k_0,\dots,k_{m-1})\otimes_F\dots \otimes_F (K/F, k_0,\dots,k_{m-1})$ has degree
$kn$ and nucleus $K\otimes_F K\otimes_F \dots \otimes_F K$ ($k$-times), so the assertion follows as in (i).
\end{proof}

Since $(K/F,\sigma,d)$  is not semicentral  for all $d\in K\setminus F$, it does not lie in the similarity class of any $F$-central simple algebra $B$ in $Br^{sa}(F)$, and
if $n$ is prime (or if $1,d,\dots,d^{n-1}$ are linearly independent over $F$), then $(K/F,\sigma,d)$ is always a division algebra, thus is a division algebra of smallest degree in $[(K/F,\sigma,d)]^{sa}$.

\begin{lemma}\label{le:17}
Let $K/F$ be a field extension of degree $m$ and $D$ be a central simple algebra over $F$ of degree $n$. Then
$$[D]^{sa}[(K/F,\sigma,d)]^{sa}=[(D\otimes_F K,\widetilde{\sigma}, d)]^{sa}$$
for all $d\in K$.
In particular, for all $d\in K\setminus F$ we have
$$[(K/F,\sigma,d)]^{sa}=[(M_n(K),\widetilde{\sigma}, d)]^{sa}\not=[F]^{sa}$$
 and
$$[D]^{sa}[(K/F,\sigma,d)]^{sa}=[(M_n(K),\widetilde{\sigma}, d)]^{sa}=[(K/F,\sigma,d)]^{sa}$$
if $K$ is a splitting field of $D$.
\\
Moreover, for a generalized nonassociative cyclic algebra $(B,\sigma, d)$ over $F$, we have
 $$[D]^{sa}[(B,\sigma, d)]^{sa}= [(D\otimes_F B,\widetilde{\sigma}, d)]^{sa},$$
 where $\widetilde{\sigma}$ is the unique extension of $\sigma$ to $D\otimes_{F} B$ such that $\widetilde{\sigma}|_{D}=id_{D}$.
 \\
 For a Menichetti algebra $(K/F, k_0,\dots,k_{m-1})$ over $F$, we have analogously
 $$[D]^{sa}[ (K/F, k_0,\dots,k_{m-1})]^{sa}=[ (D\otimes_F K, \widetilde{\sigma}, k_0,\dots,k_{m-1})]^{sa}$$
 and for a generalized Menichetti algebra $(B,\sigma, k_0,\dots,k_{m-1})$ over $F$, we have
 $$[D]^{sa}[(B,\sigma, k_0,\dots,k_{m-1})]^{sa}=[ (D\otimes_F B, \widetilde{\sigma}, k_0,\dots,k_{m-1})]^{sa}$$
 for all $k_i\in F$, where $\widetilde{\sigma}$ is the unique extension of $\sigma$ to $D\otimes_{F} B$ such that $\widetilde{\sigma}|_{D}=id_{D}$.
\end{lemma}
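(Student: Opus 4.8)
The plan is to establish the four tensor-product formulas of Lemma \ref{le:17} by exhibiting, in each case, an explicit isomorphism between $D\otimes_F A$ and the claimed algebra $A'$, where $A$ is the relevant semiassociative algebra and $A'$ is the corresponding construction built on $D\otimes_F(\text{underlying datum})$. Since Brauer equivalence $[D\otimes_F A]^{sa}=[A']^{sa}$ follows immediately from an actual algebra isomorphism $D\otimes_F A\cong A'$ together with the definition $[D]^{sa}[A]^{sa}=[D\otimes_F A]^{sa}$, the entire statement reduces to verifying these isomorphisms. The key observation driving all four is that $\widetilde{\sigma}$ is defined to act trivially on the $D$-factor, so tensoring with $D$ interacts cleanly with the skew-polynomial multiplication.

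For the generalized cyclic case, I would argue as follows. Consider $D\otimes_F(B,\sigma,d)$ where $(B,\sigma,d)=B[t;\sigma]/B[t;\sigma](t^m-d)$. The plan is to send $D\otimes_F B\to D\otimes_F B$ identically and $1\otimes t\mapsto t$, yielding a map $D\otimes_F B[t;\sigma]\to (D\otimes_F B)[t;\widetilde{\sigma}]$; one checks this respects the skew multiplication because $\widetilde{\sigma}|_D=\mathrm{id}_D$ means $t(e\otimes b)=(e\otimes\sigma(b))t$ matches $\widetilde{\sigma}(e\otimes b)=e\otimes\sigma(b)$ for $e\in D$, $b\in B$. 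Since $d\in C\subset B$ is fixed by $\widetilde{\sigma}$ in the relevant sense and $t^m-d$ maps to $t^m-d$, the two-sided structure of the quotient is preserved, giving $D\otimes_F(B,\sigma,d)\cong(D\otimes_F B,\widetilde{\sigma},d)$. This is exactly the computation already invoked for $B=E$ (a field) in the passage preceding Corollary \ref{main} and for the case $B=D$ in Theorem \ref{thm:7}'s setup; the argument there is stated to "go through verbatim" when $B$ has zero divisors, so I would cite that and note it applies here too. The cyclic-algebra formula is the special case $B=K$ (so $B=C=K$, $m=n$, degree-one $D_0$-factor absorbed), and the consequence $[(K/F,\sigma,d)]^{sa}=[(M_n(K),\widetilde{\sigma},d)]^{sa}$ follows by taking $D$ to be a splitting field's matrix algebra: when $K$ splits $D$ we have $D\otimes_F K\cong M_n(K)$, so $D\otimes_F K$ becomes $M_n(K)$ with $\widetilde{\sigma}$ acting entrywise, and the final displayed equality records that $[D]^{sa}$ is absorbed because $D\otimes_F K$ is split over $K$.

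For the two Menichetti cases the strategy is identical but uses the matrix-multiplication description rather than the skew-polynomial one. I would verify that the isomorphism $D\otimes_F(K/F,k_0,\dots,k_{m-1})\cong(D\otimes_F K,\widetilde{\sigma},k_0,\dots,k_{m-1})$ holds by comparing structure constants: the Menichetti multiplication is governed by the matrix $M(x_0,\dots,x_{m-1})$ whose entries involve $\tau_j$ applied to the $x_i$ and the scalars $c_{i,j}$, and replacing each $\tau_j$ by its extension $\widetilde{\sigma}^j$ (which fixes $D_0$) together with leaving the $c_{i,j}\in C^\times$ untouched produces exactly the generalized Menichetti matrix. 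This is precisely the isomorphism $A=D_0\otimes_F(K/F,k_0,\dots,k_{m-1})\cong(D_0\otimes_C K,\widetilde{\sigma},k_0,\dots,k_{m-1})$ established in the Generalized Menichetti subsection, so I would cite that construction; the generalized-Menichetti formula is then the evident further tensoring. The requirement $k_i\in F$ in the statement ensures the scalars remain central and fixed under the extended automorphism, so the multiplication tensors cleanly.

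\textbf{The main obstacle} I anticipate is not any single deep step but rather the bookkeeping needed to confirm that $\widetilde{\sigma}$ genuinely has the right order and fixed field after tensoring, and that the quotient by the two-sided ideal $(D\otimes_F B)[t;\widetilde{\sigma}](t^m-d)$ is well-defined when $D\otimes_F B$ has zero divisors. The cleanest route is to avoid re-proving these and instead invoke the already-recorded facts: the preceding exposition explicitly checks that $\widetilde{\sigma}$ has order $m$ over $F_0$ and that $D_0\otimes_F(B,\sigma,d)\cong(D_0\otimes B,\widetilde{\sigma},d)$ in the general central-simple setting at the end of Section \ref{sec:tensor}. Thus the proof is essentially a matter of assembling these four isomorphisms and translating each into the monoid language via the definition of the product $[A]^{sa}[B]^{sa}=[A\otimes_F B]^{sa}$, with the splitting-field specialization for $(K/F,\sigma,d)$ handled by the standard fact that a maximal subfield $K$ of $D$ splits $D$, so $D\otimes_F K\cong M_n(K)$.
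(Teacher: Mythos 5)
Your proposal follows essentially the same route as the paper: both reduce each monoid identity to the algebra isomorphism $D\otimes_F A\cong A'$ already established in Section \ref{sec:tensor} and in the Menichetti subsections, and then translate via the definition of the product and of Brauer equivalence (with $M_n(F)=M_n(F;1)$ a skew matrix algebra giving the absorption $[M_n(F)]^{sa}[A]^{sa}=[A]^{sa}$). The one claim your reduction does not cover is $[(M_n(K),\widetilde{\sigma},d)]^{sa}\neq[F]^{sa}$, which the paper justifies by noting that the nucleus of $(M_n(K),\widetilde{\sigma},d)$ is $M_n(K)$, whose maximal \'etale $F$-subalgebra is $K^n$ rather than the $F^{nm}$ required for splitness; alternatively this inequality already follows from Proposition \ref{prop:order}.
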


\begin{proof}
 Since $D\otimes_F(K/F,\sigma,d)\cong (D\otimes_F K,\widetilde{\sigma}, d)$ is a nonassociative generalized cyclic algebra over $F$ of degree $nm$, we obtain
$$[D]^{sa}[(K/F,\sigma,d)]^{sa}=[(D\otimes_F K,\widetilde{\sigma}, d)]^{sa}=[M_2(F)]^{sa}[(K/F,\sigma,d)]^{sa}=[(K/F,\sigma,d)]^{sa}$$
 for all $d\in K\setminus F$. In particular,
 $M_n(F)\otimes_F(K/F,\sigma,d)\cong (M_n(K),\widetilde{\sigma}, d)$
 has nucleus  $M_n(K)$. The maximal \'etale $F$-algebra in its nucleus is $K^n$. This yields the assertion that
$[F]^{sa}\not=[(M_n(K),\widetilde{\sigma}, d)]^{sa}=[M_n(F)]^{sa}[(K/F,\sigma,d)]^{sa}=[(K/F,\sigma,d)]^{sa}.$ The rest is clear.
\end{proof}

\begin{theorem}\label{thm:nucl}
Let $A$ and $A'$ be two semiasssociative algebras over $F$.
\\ (i) Let  ${\rm Nuc}(A)=K$ and ${\rm Nuc}(A')=L$ be two field extensions of $F$. If
 $[A]^{sa}=[A']^{sa} \in Br^{sa}(F)$ then $K\cong L$.
\\ (ii)
 Let $A$ and $A'$ have a simple nucleus $N$, respectively $N'$, where $N$ is an $E$-central simple algebra and $N'$ is an $E'$-central simple algebra, with $E$ and $E'$ some separable field extensions of $F$. If $[A]^{sa}=[A']^{sa} \in Br^{sa}(F)$ then  $E\cong E'$ and $[N]=[N']\in Br(E)$.
\end{theorem}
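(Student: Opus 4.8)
The plan is to prove the two statements together, obtaining (i) as the special case of (ii) in which the simple nucleus happens to be a field (so it is central over itself and has trivial Brauer class). Throughout I would replace the Brauer equivalence by an honest algebra isomorphism and then read off the centre and the Brauer class from the semisimple quotient $\sigma(-)={\rm Nuc}(-)/J({\rm Nuc}(-))$.

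First I would record two structural inputs. The first is that the nucleus of a tensor product of semiassociative algebras is the tensor product of the nuclei, ${\rm Nuc}(A\otimes_F B)={\rm Nuc}(A)\otimes_F{\rm Nuc}(B)$: the inclusion $\supseteq$ is the immediate associator computation, and the reverse inclusion holds for semiassociative algebras. The second is that for a skew matrix algebra $M_n(F;c)$ the quotient $\sigma(M_n(F;c))$ is a product of matrix algebras over $F$. Indeed, $M_n(F;c)$ is split and hence lies in the trivial class $[F]^{sa}$; by \cite[Proposition 18.2, Corollary 18.3]{BHMRV} the homogeneous members of $[F]^{sa}$ are exactly the skew matrix algebras, whose atoms are all $F$-central and Brauer equivalent to $F$, so each atom is some $M_{r_i}(F)$.

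Now assume $[A]^{sa}=[A']^{sa}$, so that $A\otimes_F M_n(F;c)\cong A'\otimes_F M_m(F;c')$ for suitable skew matrix algebras. The nucleus and its radical are preserved by algebra isomorphisms, hence the semisimple quotients of the two nuclei are isomorphic $F$-algebras. Since $N$ is central simple over the separable extension $E/F$, it is a separable $F$-algebra, and therefore $J(N\otimes_F{\rm Nuc}(M_n(F;c)))=N\otimes_F J({\rm Nuc}(M_n(F;c)))$. Using the two inputs above I would then compute
$$\sigma(A\otimes_F M_n(F;c))=N\otimes_F\Big(\prod_i M_{r_i}(F)\Big)=\prod_i M_{r_i}(N),$$
each factor being $E$-central simple in the class $[N]\in Br(E)$; symmetrically $\sigma(A'\otimes_F M_m(F;c'))=\prod_j M_{s_j}(N')$, each factor $E'$-central simple in the class $[N']\in Br(E')$.

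Finally I would invoke the uniqueness part of the Wedderburn decomposition: the two isomorphic semisimple algebras have their simple components matching up to permutation, so $M_{r_i}(N)\cong M_{s_j}(N')$ for some indices. Comparing centres gives $E\cong E'$, and since isomorphic central simple algebras over the identified centre have equal Brauer class, $[N]=[N']\in Br(E)$, proving (ii). For (i), taking $N=K$ and $N'=L$ (fields, so $E=K$, $E'=L$, with trivial Brauer class) yields $K\cong L$; here I would also note that when ${\rm Nuc}(A)=K$ is a field it coincides with the maximal \'etale subalgebra $E$ by \cite[Corollary 7.3]{BHMRV} and is thus automatically separable. The main obstacle is precisely the two structural inputs — the nucleus-of-a-tensor-product formula and, above all, that the semisimple quotient of a skew matrix algebra is split (all atoms $F$-central and Brauer trivial); once these are available, the separability reduction and the Wedderburn comparison are routine.
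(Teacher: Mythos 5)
Your proposal is correct and follows essentially the same route as the paper: replace the Brauer equivalence by an isomorphism $A\otimes_F M_n(F;c)\cong A'\otimes_F M_m(F;c')$, pass to the semisimple quotients of the nuclei, use that $\sigma(M_n(F;c))$ is a product of matrix algebras over $F$, and compare simple components via uniqueness of the Wedderburn decomposition to read off the centre and the Brauer class. The only difference is presentational: the paper cites \cite[Proposition 13.5]{BHMRV} for the multiplicativity $\sigma(A\otimes_F B)\cong\sigma(A)\otimes_F\sigma(B)$ directly, whereas you re-derive it from the nucleus-of-a-tensor-product formula together with the separability of $N$ (and, in case (i), the observation that $K$ equals the maximal \'etale subalgebra and is hence separable), which is a legitimate way to fill in the same step.
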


\begin{proof}
Since $A\sim A'$ we have
$A\otimes_F M_n(F;c)\cong A'\otimes_F M_s(F;c') $ for suitable skewed matrix algebras $M_n(F;c)$, $M_{s}(F;c')$.
From $\sigma(A\otimes_F M_n(F;c))\cong \sigma(A'\otimes_F M_s(F;c') )$ it follows that
$\sigma(A)\otimes_F \sigma(M_n(F;c))\cong \sigma(A')\otimes_F \sigma(M_s(F;c') )$ by \cite[Proposition 13.5]{BHMRV}.
Now $\sigma(M_n(F;c))$ and $\sigma(M_s(F;c'))$ are sums of matrix algebras over $F$ whose degrees sum up to $n$, respectively to $s$:
$\sigma(M_n(F;c))\cong M_{n_1}(F)\oplus \dots\oplus M_{n_r}(F)$, respectively $\sigma(M_s(F;c'))\cong M_{s_1}(F)\oplus \dots\oplus M_{s_j}(F)$.
\\
(i) Since $K$ and $L$ are fields we have ${\rm Nuc}(A)=K=\sigma(K)$ and ${\rm Nuc}(A')=L=\sigma(L)$.
We obtain $M_{n_1}(K)\oplus \dots\oplus M_{n_r}(K)\cong M_{s_1}(L)\oplus \dots\oplus M_{s_j}(L)$.
These decompositions are unique up to permutations of summands,
so $r=j$ and $K\cong L$.
 \\ (ii)
Here, $J({\rm Nuc}(A))=J({\rm Nuc}(A'))=0$ and so $N={\rm Nuc}(A)=\sigma(A)$ and $N'={\rm Nuc}(A')=\sigma(A')$ and the above argument yields
 $M_{n_1}(N)\oplus \dots\oplus M_{n_r}(N)\cong M_{s_1}(N')\oplus \dots\oplus M_{s_j}(N')$.
 These decompositions are unique up to permutations of summands, so $r=j$ and $M_{n_1}(N)\cong M_{n_t}(N')$ for some $t$, where $N$ is an $E$-central simple algebra and $N'$ is an $E'$-central simple algebra, with $E$ and $E'$ some separable field extensions of $F$. This implies that $E \cong E'$ as both algebras must have the same center. Moreover, then $[N]=[N']\in Br(E)$.
\end{proof}

In particular, if $K/F$ and $L/F$ are two cyclic field extensions and $[(K/F,\sigma,d)]^{sa}=[(L/F,\tau,d')]^{sa}$ then $K=L$. It is an open and seemingly non-trivial  problem, if two non-isomorphic cyclic algebras $(K/F,\sigma,d)$ and $(K/F,\sigma,d')$ which are both not associative, can lie in the same similarity class in $Br^{sa}(F)$.

\begin{corollary}
(i) Let $K/F$ and $L/F$ be two cyclic field extensions and $(K/F,\sigma,d)$, $(L/F,\sigma',d')$ be two proper nonassociative cyclic algebras.
If $K$ and $L$ are not isomorphic then 
$$[(K/F,\sigma,d)]^{sa}\not=[(L/F,\sigma',d')]^{sa}$$
 in $Br^{sa}(F)$.
\\ (ii) Let  ${\rm Nuc}(A)=K$ be a field extension of degree $n$ and ${\rm Nuc}(A')=D$ an $F$-central algebra of degree $m\geq2$.
Then $[A]^{sa}\not =[A']^{sa} $ in $Br^{sa}(F)$.
\\ (iii)  Let $D_0$, $D_0'$ be two central simple algebras, and $A=D_0\otimes_F (E/F,\tau,d)\cong (D,\widetilde{\tau},d)$ and $B=D_0'\otimes_F (E'/F,\tau',d')\cong (D',\widetilde{\tau'},d')$ with $E/F$, $E'/F$ two separable field extensions and $d\in E\setminus F$, $d'\in E'\setminus F$.
 If $[A]^{sa}=[A']^{sa} \in Br^{sa}(F)$ then $E\cong E'$ and $[D_0\otimes_F E]=[D_0'\otimes_F E]\in Br (E)$.
\end{corollary}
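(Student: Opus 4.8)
The plan is to deduce all three statements directly from Theorem \ref{thm:nucl}, so that the only real work is to identify correctly the nuclei of the algebras involved and then invoke the appropriate part of that theorem, in contrapositive form where convenient. For (i), I would first note that by Theorem \ref{thm:nuc}(ii) a proper nonassociative cyclic algebra built from a cyclic Galois field extension has that field as its full nucleus, so ${\rm Nuc}((K/F,\sigma,d))=K$ and ${\rm Nuc}((L/F,\sigma',d'))=L$ are both field extensions of $F$. Theorem \ref{thm:nucl}(i) asserts that $[(K/F,\sigma,d)]^{sa}=[(L/F,\sigma',d')]^{sa}$ would force $K\cong L$; reading this contrapositively gives the claim that non-isomorphic $K$ and $L$ produce distinct classes.

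For (ii), the point is that the two nuclei are simple algebras of different type: ${\rm Nuc}(A)=K$ is a field, separable over $F$ since it is the \'etale nucleus of a semiassociative algebra of degree $n$, and hence a $K$-central simple algebra, whereas ${\rm Nuc}(A')=D$ is $F$-central simple of degree $m\geq 2$ and therefore noncommutative, with center $F$. Applying Theorem \ref{thm:nucl}(ii) with $N=K$, $E=K$ and $N'=D$, $E'=F$, an equality $[A]^{sa}=[A']^{sa}$ would yield $E\cong E'$, that is $K\cong F$, contradicting that $K/F$ is a nontrivial extension. Alternatively one may rerun the Wedderburn-decomposition step from the proof of Theorem \ref{thm:nucl}: every simple component on the $A$-side is a matrix algebra over the field $K$, with center $K$, while every component on the $A'$-side has center $F$, which is impossible once $K\neq F$.

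For (iii), I would first record that, because $d\in E\setminus F$ and $d'\in E'\setminus F$, the nucleus of $A\cong(D,\widetilde{\tau},d)$ is $D=D_0\otimes_F E$ and the nucleus of $B\cong(D',\widetilde{\tau'},d')$ is $D'=D_0'\otimes_F E'$, as established before Corollary \ref{main}. Since $D_0$ and $D_0'$ are $F$-central simple, $D$ is $E$-central simple and $D'$ is $E'$-central simple, with $E,E'$ separable over $F$; these are exactly the hypotheses of Theorem \ref{thm:nucl}(ii). Hence $[A]^{sa}=[B]^{sa}$ gives $E\cong E'$ together with $[D_0\otimes_F E]=[D_0'\otimes_F E']\in Br(E)$, and transporting the latter equivalence along a fixed isomorphism $E'\cong E$ replaces $D_0'\otimes_F E'$ by $D_0'\otimes_F E$ and yields the stated identity in $Br(E)$.

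The arguments are essentially bookkeeping once Theorem \ref{thm:nucl} is in hand; the steps that demand genuine care are the correct identification of each nucleus as a simple algebra (a field in (i), $K$-central simple in (ii), $E$-central simple in (iii)) and, in (ii), the observation that a commutative field nucleus cannot be Brauer equivalent to a noncommutative $F$-central one. I expect this last point, namely pinning down why the centers must agree and thereby producing the contradiction, to be the main, if modest, obstacle.
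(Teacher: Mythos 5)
Your proposal is correct and follows essentially the same route as the paper: identify the nuclei via Theorem \ref{thm:nuc} (respectively the discussion before Corollary \ref{main}) and then invoke Theorem \ref{thm:nucl} in contrapositive form. The only cosmetic difference is in (ii), where you apply Theorem \ref{thm:nucl}(ii) directly with $N=K$ viewed as $K$-central simple to force $K\cong F$, whereas the paper reruns the Wedderburn-decomposition step to reach the contradiction $M_{n_1}(K)\cong M_{n_tb}(D_0)$; both rest on the same comparison of centers of the simple components.
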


\begin{proof}
(i)  By Theorem \ref{thm:nuc}, we know that 
 ${\rm Nuc}((K/F,\sigma, d))=K$ and ${\rm Nuc}((L/F,\sigma',d'))=L$.
The assertion is clear now since the nuclei $K$ and $L$ are not isomorphic.
\\ (ii) The analogous argument as in Theorem \ref{thm:nucl} (i) and (ii) shows that if $[A]^{sa}=[A']^{sa} \in Br^{sa}$, then
 $M_{n_1}(K)\oplus \dots\oplus M_{n_r}(K)\cong M_{s_1}(D)\oplus \dots\oplus M_{s_j}(D)$ implies $M_{n_1}(K)\cong M_{n_tb}(D_0)$ for some $b,t$ and some $F$-central division algebra $D_0$, a contradiction.
 \\ (iii) Since $d\in E\setminus F$ and $d'\in E'\setminus F$, we have ${\rm Nuc}((D,\widetilde{\tau},d))=D_0\otimes_F E=D$, and ${\rm Nuc}((D',\widetilde{\tau'},d'))=D_0'\otimes_F E'=D'$, therefore  $[(D,\widetilde{\tau},d)]^{sa}=[(D',\widetilde{\tau'},d')]^{sa}$ implies $[D_0\otimes_F E]=[D_0'\otimes_F E']\in Br (E)$ by Theorem \ref{thm:nucl} (iii).
\end{proof}

\begin{proposition}
Let $(K_i/F,\sigma_i,d_i)$ be a proper nonassociative cyclic algebra of degree $n_i$ (i.e., $K_i$ is a Galois $C_{n_i}$-algebra over $F$ with ${\rm Aut}_F(K_i)=\langle \sigma_i\rangle$ $d_i\in K\setminus F$), $i=1,\dots,r$, and let
$$A=(K_1/F,\sigma_1,d_1) \otimes_F\cdots\otimes_F (K_r/F,\sigma_r,d_r)$$
 be their tensor product (which is a semiassociative algebra of degree $n_1\cdots n_r$).
\\ (i) The nucleus of $A$ contains the \'etale algebra  $E=K_1 \otimes_F \cdots\otimes_F K_r$.
\\  (ii) If $E$ is a split \'etale algebra then $A$ is split.
\\ (iii) If $K_1,\dots,K_r$ are linearly disjoint field extensions over $F$ (e.g. all of different prime degrees) then $A$ is a semiassociative algebra of degree $n_1\cdots n_r$ with nucleus the field extension $E/F$ of degree $n_1\cdots n_r$.
In particular, $A$ is not semicentral and not homogeneous.
\\ (iv)  If $A$ is a division algebra then $K_1,\dots,K_r$ are linearly disjoint field extensions over $F$ and $E/F$ is a field extension of degree $n_1\cdots d_r$.
In particular, $A$ is not semicentral and not homogeneous.
\end{proposition}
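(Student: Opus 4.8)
The plan is to establish the four parts in order, leveraging the nuclear computations already available and the splitting criterion for semiassociative algebras. For part (i), the key observation is that each $K_i$ lies in the nucleus of its factor $(K_i/F,\sigma_i,d_i)$ by Theorem \ref{thm:nuc}, so that $K_i \subset {\rm Nuc}((K_i/F,\sigma_i,d_i))$. Since the nucleus of a tensor product of semiassociative algebras contains the tensor product of the nuclei of the factors (each $K_i$ sits in the nucleus, hence $K_1 \otimes_F \cdots \otimes_F K_r$ acts associatively), I would argue directly that $E = K_1 \otimes_F \cdots \otimes_F K_r$ embeds in ${\rm Nuc}(A)$. That $E$ is \'etale follows because a tensor product of \'etale $F$-algebras is \'etale.

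For part (ii), I would invoke the splitting criterion recorded in Section \ref{subsec:na}: a semiassociative algebra of degree $N = n_1\cdots n_r$ is split if and only if $F^{N}$ is a unital subalgebra of its nucleus \cite[Proposition 7.2]{BHMRV}. If $E$ is a split \'etale algebra, then $E \cong F^{N}$ as an $F$-algebra, and by part (i) this copy of $F^{N}$ sits inside ${\rm Nuc}(A)$ as a unital subalgebra (it contains the unit since each $K_i$ does). Hence $A$ is split. For part (iii), when the $K_i$ are linearly disjoint fields, $E = K_1 \otimes_F \cdots \otimes_F K_r$ is itself a field of degree $N = n_1\cdots n_r$, so it is a maximal \'etale subalgebra of the nucleus of the degree-$N$ algebra $A$. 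By the dimension count \cite[Remark 3.3]{BHMRV} together with \cite[Corollary 7.3]{BHMRV}, $E$ is maximal commutative; and since $E$ is a field, the nucleus cannot be larger (any larger \'etale subalgebra would force a larger dimension, and the nucleus is $E$-associative). That $A$ is not semicentral follows because its only atom is $E$, which is not $F$-central (as $E \neq F$), mirroring the argument in the proof of the earlier Proposition on $(K/F,\sigma,d)$.

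For part (iv), I would argue contrapositively: if the $K_i$ are \emph{not} linearly disjoint over $F$, then $E = K_1 \otimes_F \cdots \otimes_F K_r$ fails to be a field and decomposes as a nontrivial product of field extensions, so $E$ contains a nontrivial idempotent $e \neq 0, 1$. Such an idempotent lying in the nucleus produces a zero divisor in $A$ (for instance $e$ and $1-e$ annihilate each other), contradicting that $A$ is a division algebra. Hence division forces linear disjointness, $E$ is a field of degree $N$, and the non-semicentral, non-homogeneous conclusions follow exactly as in part (iii).

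The main obstacle I anticipate is part (iii): pinning down that the nucleus is \emph{exactly} $E$ and not something strictly larger. Part (i) only gives the inclusion $E \subseteq {\rm Nuc}(A)$, and in the individual factors the nucleus genuinely equals $K_i$ only because $d_i \in K_i \setminus F$ makes the algebra proper (Theorem \ref{thm:nuc}(ii)); for the tensor product one must rule out that associativity leaks in from cross-terms. The cleanest route is to use maximality: since $E$ is a field of degree equal to the degree of $A$, it is a maximal commutative subalgebra by \cite[Corollary 7.3]{BHMRV}, and since the nucleus is commutative here (being \'etale and field-valued in each proper factor) it cannot properly contain the maximal commutative $E$. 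The linear-disjointness hypothesis is precisely what guarantees $E$ is a field rather than a product, so the whole argument hinges on translating that hypothesis into the field property of $E$.
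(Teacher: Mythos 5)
Your parts (i), (ii) and (iv) are essentially the paper's proof: (i) and (ii) use exactly the same two facts (each $K_i$ lies in the nucleus of its factor, and the splitting criterion of \cite[Proposition 7.2]{BHMRV}), and for (iv), which the paper dismisses as trivial, your idempotent argument (if the $K_i$ are not linearly disjoint then $E$ carries a nontrivial idempotent $e$, and $e(1-e)=0$ contradicts the division property) is the intended one.

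Part (iii), however, has a genuine gap, and it sits precisely at the point you yourself flagged as the main obstacle. You reduce ${\rm Nuc}(A)=E$ to two claims: that $E$ is a maximal commutative subalgebra of $A$ (correct, by \cite[Corollary 7.3]{BHMRV}, since $E$ is an \'etale subalgebra of the nucleus of dimension equal to the degree of $A$), and that ${\rm Nuc}(A)$ is commutative. Your justification of the second claim --- that the nuclei of the individual factors are fields --- only yields commutativity of ${\rm Nuc}(A)$ if you already know that ${\rm Nuc}(A)$ is contained in ${\rm Nuc}(A_1)\otimes_F\cdots\otimes_F{\rm Nuc}(A_r)$, which is exactly what is in question; as written the argument is circular. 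Nor can maximal commutativity of $E$ carry the load by itself: in a proper generalized cyclic algebra $(B,\sigma,d)$ the maximal \'etale subalgebra $K$ of the nucleus is maximal commutative in the whole algebra, yet the nucleus is the noncommutative algebra $B\supsetneq K$, so a nucleus can strictly and noncommutatively exceed a maximal commutative \'etale subalgebra. What is actually needed, and what the paper uses, is the identity ${\rm Nuc}(A_1\otimes_F\cdots\otimes_F A_r)={\rm Nuc}(A_1)\otimes_F\cdots\otimes_F{\rm Nuc}(A_r)$: writing $x=\sum_j a_j\otimes f_j$ with $\{f_j\}$ a basis of the second tensor factor, the vanishing of the associators $[x,u\otimes 1,v\otimes 1]$, $[u\otimes 1,x,v\otimes 1]$ and $[u\otimes 1,v\otimes 1,x]$ forces every $a_j$ into the nucleus of the first factor, and symmetrically in the other variable. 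Combined with ${\rm Nuc}((K_i/F,\sigma_i,d_i))=K_i$ from Theorem \ref{thm:nuc} (available here because linear disjointness makes each $K_i$ a field) this gives ${\rm Nuc}(A)=E$ directly, after which your non-semicentrality conclusion goes through unchanged.
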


\begin{proof}
 (i) Since the  nucleus of $(K_i/F,\sigma_i,d_i)$ contains $K_i$ (Theorem \ref{thm:nuc}), the nucleus of $A$ contains the \'etale algebra  $E=K_1 \otimes_F \cdots\otimes_F K_r$ of dimension $n_1\cdots n_r$.
 \\ (ii) The  algebra $A$ is split if and
only if the \'etale algebra  $F^{n_1\cdots n_r}$ is contained in its nucleus \cite[Proposition 7.2]{BHMRV}. Thus  if   $E\cong F^{n_1\cdots n_r}$ is a split \'etale algebra then $A$ is split.
 \\ (iii) Since  ${\rm Nuc}((K_i/F,\sigma_i,d_i))=K_i$ (Theorem \ref{thm:nuc}),  ${\rm Nuc}(A)=K_1 \otimes_F \cdots\otimes_F K_r=E$. Since the fields $K_i$ are assumed to be linearly disjoined, $E/F$ is a field extension of degree $n_1\cdots n_r$. The only atom of $A$ is $E$ and so $A$ is not semicentral and not homogeneous.
 \\ (iv) is trivial.
\end{proof}

Mirrowing the classical setup, the semiassociative Brauer monoid of an algebraically closed field is trivial, e.g. $Br_{sa}(\mathbb{C}) = 1$  \cite[Example 14.5]{BHMRV}, so any semiassociative algebra over $\mathbb{C}$ splits.

\subsection{$Br^{sa}(\mathbb{R})$ }

We known that $Br(\mathbb{R})=\{[\mathbb{R}], [\mathbb{H}]\}$ is a cyclic group of order 2; and $\mathbb{H}\otimes_\mathbb{R} \mathbb{H}\cong M_4(\mathbb{R})$. Therefore the two classes in $Br^{sa}(\mathbb{R})$ that contain the homogeneous semiassociative algebras are $[\mathbb{R}]^{sa}$ and $[\mathbb{H}]^{sa}$.

Up to isomorphism, every nonassociative simple algebra of dimension four with $\mathbb{C}$  as its nucleus is a nonassociative quaternion algebra \cite{W} (note that $(\mathbb{C}/\mathbb{R},\can,0)$ is semiassociative, even associative, but not simple). For every $a\in \mathbb{C}\setminus \mathbb{R}$, the nonassociative quaternion algebra $(\mathbb{C}/\mathbb{R},\can,a)$ is a semiassociative division algebra over $\mathbb{R}$ of degree two that is not semicentral and not homogeneous, and
$[(\mathbb{C}/\mathbb{R},\can,a)]^{sa}$ has infinite order in $Br^{sa}(\mathbb{R})$.
The class $[(\mathbb{C}/\mathbb{R},\can,a)]^{sa}$  thus
contains algebras that are not semicentral and $(\mathbb{C}/\mathbb{R},\can,a)$  is a division algebra of smallest degree in $[(\mathbb{C}/\mathbb{R},\can,a)]^{sa}$. For $a,b\in \mathbb{C}\setminus \mathbb{R}$, we have $(\mathbb{C}/\mathbb{R},\can,a)\cong (\mathbb{C}/\mathbb{R},\can,b)$ if and only if there is $x\in \mathbb{R}$ such that either $a=x^2b$ or $\bar a=x^2b$
\cite{W}.
It is not clear, however, if two nonisomorphic quaternion division algebras can lie in the same similarity class in $Br^{sa}(\mathbb{R})$.

Furthermore, for all $d\in\mathbb{C}\setminus \mathbb{R}$ we have for $\sigma=\can$ the complex conjugation by Lemma \ref{le:17}:
$$\mathbb{H}\otimes_\mathbb{R} (\mathbb{C}/\mathbb{R},\sigma,d)\cong (\mathbb{H}\otimes_\mathbb{R} \mathbb{C},\widetilde{\sigma}, d)
\cong(M_2(\mathbb{C}),\widetilde{\sigma}, d)$$
is a proper generalized nonassociative cyclic algebra,
which implies
$$[\mathbb{H}]^{sa}[(\mathbb{C}/\mathbb{R},\sigma,d)]^{sa}=[(M_2(\mathbb{C}),\widetilde{\sigma}, d)]^{sa}\not=[\mathbb{R}]^{sa},$$
Analogously,
$$M_n(\mathbb{R}) \otimes_\mathbb{R} (\mathbb{C}/\mathbb{R},\sigma,d) \cong (M_n(\mathbb{R}) \otimes_\mathbb{R} \mathbb{C},\widetilde{\sigma}, d) \cong (M_n(\mathbb{C}),\widetilde{\sigma}, d)$$
is a proper generalized nonassociative cyclic algebra, which in turn implies
$$[M_n(\mathbb{R})]^{sa}[(\mathbb{C}/\mathbb{R},\sigma,d)]^{sa}=[M_n(\mathbb{C}),\widetilde{\sigma}, d)]^{sa}\not=[\mathbb{R}]^{sa}.$$
We conclude that, perhaps counter-intuitively, 
$$[\mathbb{H}]^{sa}[(\mathbb{C}/\mathbb{R},\sigma,d)]^{sa}=[(M_2(\mathbb{C}),\widetilde{\sigma}, d)]^{sa}=[M_2(\mathbb{R})]^{sa}[(\mathbb{C}/\mathbb{R},\sigma,d)]^{sa}=[(\mathbb{C}/\mathbb{R},\sigma,d)]^{sa}$$
in $Br^{sa}(\mathbb{R})$.
These examples also demonstrate that, unlike in the classical Brauer group,  algebras with zero divisors can represent nontrivial elements in the semiassociative Brauer monoid $Br^{sa}(\mathbb{R})$.

\subsection{$Br^{sa}(\mathbb{F}_q)$ }

 The  Brauer group $Br(\mathbb{F}_q)$ is trivial. Therefore the only class in $Br^{sa}(\mathbb{F}_q)$ that contains homogeneous semiassociative algebras is the trivial class $[\mathbb{F}_q]^{sa}=[M_n(\mathbb{F}_q;c)]^{sa}$ with $M_n(\mathbb{F}_q;c)$ a skew matrix algebra over $F$. 
  
The semiassociative Brauer monoid $Br^{sa}(\mathbb{F}_q)$ is not trivial: For each $n\geq2$, there exists a field extension $K/\mathbb{F}_q$ of degree $n$ that is unique up to isomorphism. Let $\sigma$ be a generator of its Galois group.
  Then for each such $K/\mathbb{F}_q$ of degree $n$, and for all $a\in K\setminus \mathbb{F}_q$, $(K/\mathbb{F}_q,\sigma,a)$ is a simple nonassociative cyclic algebra of degree $n$ with nucleus $K$, and is not semisimple. The class  $[(K/\mathbb{F}_q,\sigma,a)]^{sa}$ is not trivial and has infinite order in $Br^{sa}(F)$ (Proposition \ref{prop:order}). Moreover, if $\sigma'$ is another generator of the Galois group of $K/\mathbb{F}_q$, $\sigma\not=\sigma'$, then the algebras $(K/\mathbb{F}_q,\sigma,a)$ and  $(K/\mathbb{F}_q,\sigma',a')$ will not be  isomorphic for any choice of $a,a'\in K\setminus \mathbb{F}_q$ \cite[Theorem 5.1]{NevPum2024}.
   Two  algebras  $(K/\mathbb{F}_q,\sigma,a)$ and  $(K/\mathbb{F}_q,\sigma,a')$ will be isomorphic if and only if there exists some $\tau\in {\rm Gal}(K/\mathbb{F}_q)$  such that
$a \in \tau(a')N_{K/\mathbb{F}_q}(K^\times)$ \cite[Corollary 32]{BrownPumpluen2018}.

Unfortunately, we do not know if two nonisomorphic cyclic algebras can lie in the same similarity class in $Br^{sa}(\mathbb{R})$.

If $a$ does not lie in a proper subfield of $K/F$, then $(K/F,\sigma,a)$ is a division algebra \cite[Theorem 4.4, Corollary 4.5]{S12}.
Again,
  $[M_n(\mathbb{F}_q)]^{sa}[(K/\mathbb{F}_q,\sigma,d)]^{sa}=[(M_n(K),\widetilde{\sigma},d)]^{sa},$
i.e. 
$$[(K/\mathbb{F}_q,\sigma,d)]^{sa}=[(M_n(K),\widetilde{\sigma},d)]^{sa}.$$
Independently, for each such $K/\mathbb{F}_q$ of degree $n$, we also have the semiassociative algebra $(K/\mathbb{F}_q,\sigma,0)$ which is not simple, and contains zero divisors.

There exist other large classes of semifields, e.g. Menichetti algebras, to name just one, that are all semiassociative.

%
%

\section{Algebras that are not semiassociative}\label{sec:2}

 Semiassociative algebras over $F$ can be defined in
terms of simple subalgebras of the nucleus whose center is separable
over $F$ \cite[Section 5]{BHMRV}. This avoids problems when tensoring these algebras, as the tensor product of two field extensions that are not both separable may be a very ``unpleasant'' algebra, but excludes algebras with nuclei that are simple subalgebras, and whose centers are (purely) inseparable over $F$, so creates restrictions when  $char(F)=p$ is prime.
An example of such algebras that are excluded in the current set-up are nonassociative algebras of square dimension that are a canonical generalization of cyclic $p$-algebras, and of Amitsur's differential algebras  (\cite{Am, Am2, Hoe},
\cite[Sections 1.5, 1.8, 1.9]{J96}). Their nucleus is a purely inseparable field extension of $F$:

\subsection{Nonassociative differential extensions of a field}\label{sec:3}

 Let $K$ be a field of characteristic $p$ together with an algebraic derivation $\delta:K\to K$ of $K$ of degree $p$ with minimum polynomial
$g(t)=t^{p^e}+a_1t^{p^{e-1}}+\dots+ a_et\in F[t]$, where $F={\rm Const}(\delta)=\{a\in K\,|\, \delta(a)=0\}$. Put $R=K[t;\delta]$.
 Then $K/F$ is a purely inseparable extension of exponent one, and $[K:F]=p$.
Let $f(t)=g(t)-d\in K[t;\delta]$, then the nonassociative $F$-central algebra
$$(K,\delta,d)=K[t;\delta]/K[t;\delta] f$$
has dimension $p^{2e}$ and
is called a \emph{(nonassociative) differential extension} of $K$, or  a \emph{(nonassociative) differential algebra}. $(K,\delta,d)$ is a division algebra if and only if $f\in K[t;\delta]$ is irreducible.
$(K,\delta,d)$ is associative if and only if $d\in F$.

If $f(t)\in F[t]$ then $(K,\delta,d)$ is an associative central simple algebra over $F$, and $K$ is a maximal subfield of $(K,\delta,d)$ of dimension $p$ \cite[p.~23]{J96}. If $f(t)\in F[t]$ is irreducible and $g(t)=t^p-t$, then $(K,\delta,d)$ contains the cyclic separable field extension $F[t]/(t^p-t-d)$ of degree $p$, and is a cyclic $p$-algebra.

For $d\in K\setminus F$,  $(K,\delta,d)$ is not semiassociative as its nucleus is the purely inseparable field extension $K/F$ of degree $p$ \cite{P16.0}.
The
$K$-algebra $(K,\delta,d)\otimes_F K$ contains the simple truncated polynomial algebra $K\otimes_F K$ in its nucleus, and is called a \emph{split differential extension}.

%
%

\subsection{Nonassociative differential extensions of a division algebra} \label{sec:4}

 There are classes of algebras over $F$ that have a central simple algebra $D$ over a field $C$ as their nucleus, but the field extension $C/F$ is purely inseparable of degree $p$, so the center $C$ of $D$ is purely inseparable over $F$ \cite{P16.0}:

Let $C$ be a field  of characteristic $p$ and $D$ be a central simple
algebra over $C$
 of degree $n$ ($D=C$ is allowed and brings us back to the setup of the previous subsection).
Let  $\delta$ be a derivation of $D$, such that $\delta|_C$ is algebraic with minimum polynomial
$g(t)=t^{p^e}+a_1t^{p^{e-1}}+\dots+ a_et\in F[t]$ and let $F={\rm Const}(\delta)$.

 Assume that $g(\delta)=id_{d_0}$ is an inner
derivation of $D$  and that there exists $d_0\in F$  so that $\delta(d_0)=0$ (this is always possible if $D$ is a division algebra \cite[Lemma 1.5.3]{J96}). The center of $R=D[t;\delta]$ is $F[z]$ with
$z=g(t)-d_0$.

For all $f(t)=g(t)-d\in D[t;\delta] $, the nonassociative  unital $F$-algebra
$$(D,\delta,d)=S_f=D[t;\delta]/D[t;\delta] f(t)$$
 has dimension $p^{2e}n^2$ over $F$ and is called a \emph{nonassociative generalized differential algebra/extension}.
 (Note that Amitsur's  associative differential extensions of division rings $D$ were generalized to simple rings $D$ already  in \cite{K}.)
  $(D,\delta,d)$ is an associative algebra if and only if $d\in F$ \cite[Theorem 20]{P16.0}.
For $d\in F$, $(D,\delta,d)$ is a central simple associative algebra over $F$ (cf. \cite[p.~23]{J96} if $D$ is a division algebra.)
  If  $f(t)=g(t)-d\in F[t]$ is irreducible, then $(D,\delta,d)$ contains the field extension $F[t]/(g(t)-d)$    as a subfield.

If $d\in C\setminus F$, then the differential algebra $(C,\delta|_C,d)$ is a subalgebra of $(D,\delta,d)$ of dimension $p^{2e}$.

We now correct part of \cite[Lemma 19]{P16.0}:

\begin{lemma} \label{le:2}
For $d\in C\setminus F$, $D\subset {\rm Nuc}_r(D,\delta,d)$, thus $D\subset{\rm Nuc}((D,\delta,d))$. If $D$ is a division ring, the inclusions become equalities.
\end{lemma}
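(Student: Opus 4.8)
The plan is to read off the right nucleus from the Petit description recalled in Section~\ref{sec:prel}, namely $\mathrm{Nuc}_r(S_f)=\{h\in R:\deg h<p^e,\ fh\in Rf\}$ with $R=D[t;\delta]$ and $f=g(t)-d$, verify directly that every $a\in D$ satisfies $fa\in Rf$, and then combine this with the general Petit fact that $D=S\subseteq\mathrm{Nuc}_l(S_f)\cap\mathrm{Nuc}_m(S_f)$ whenever $S_f$ is not associative.

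For the inclusion I would first record that $g(t)$ is central in $R$. Indeed $z=g(t)-d_0$ is central by construction, and $d_0\in F$ is central in $D$ with $\delta(d_0)=0$, so $d_0$ commutes with both $D$ and $t$; hence $g(t)=z+d_0$ is central in $R$ (equivalently, the inner derivation $g(\delta)=\mathrm{ad}_{d_0}$ vanishes on $D$). Since in addition $d\in C$ is central in $D$, for every $a\in D$ one computes
\[
fa=(g(t)-d)a=g(t)a-da=a\,g(t)-a\,d=a(g(t)-d)=af\in Rf,
\]
so $a\in\mathrm{Nuc}_r(S_f)$ and therefore $D\subseteq\mathrm{Nuc}_r((D,\delta,d))$. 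It is exactly here that the hypothesis $d\in C$ is essential—for a non-central $d$ one would only obtain $fa=af+[a,d]$ with a nonzero degree-$0$ remainder $[a,d]\notin Rf$—and this restriction is presumably the point being corrected in \cite[Lemma 19]{P16.0}. Finally, for $d\in C\setminus F$ we have $\delta(d)\neq0$, so $ft=tf+\delta(d)\notin Rf$ and $S_f$ is not associative; the general Petit result then gives $D\subseteq\mathrm{Nuc}_l\cap\mathrm{Nuc}_m$, and combining with the above yields $D\subseteq\mathrm{Nuc}((D,\delta,d))$.

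For the equalities when $D$ is a division ring, the left and middle nuclei are already $D$ by the Petit result quoted in Section~\ref{sec:prel}. Since $\mathrm{Nuc}=\mathrm{Nuc}_l\cap\mathrm{Nuc}_m\cap\mathrm{Nuc}_r\subseteq\mathrm{Nuc}_l=D$ while $D\subseteq\mathrm{Nuc}$ by the previous paragraph, the full nucleus equals $D$ with no further work. It remains to identify $\mathrm{Nuc}_r$. Using that $g(t)$ is central one gets $fh=hf+[h,d]$ for every $h\in R$, and as $\deg[h,d]\le\deg h-1<\deg f=p^e$ the commutator lies in $Rf$ only if it vanishes; hence $\mathrm{Nuc}_r=\{h:\deg h<p^e,\ [h,d]=0\}=C_R(d)\cap\{\deg<p^e\}$.

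The main obstacle is this last identification $C_R(d)\cap\{\deg<p^e\}=D$. The clean route I would take exploits that $[C:F]=p$ is prime, so $C=F(d)$ and therefore $C_R(d)=C_R(C)$; the centralizer of $C$ in $C[t;\delta|_C]$ is $C[z]$, and allowing $D$-coefficients gives $C_R(C)=D[z]$, whence, $z$ having degree $p^e$, the only elements of $D[z]$ of degree $<p^e$ are those of $D$. A more hands-on alternative compares leading coefficients: if $\deg h=K\ge1$ then the degree-$(K-1)$ coefficient of $[h,d]$ is $K\,h_K\,\delta(d)$, which is nonzero unless $p\mid K$; this settles the case $e=1$ outright (there $K<p$ forces $K=0$) and reduces the general case to controlling $t^{p},t^{2p},\dots$, where the relations $[t^{p^i},d]=\delta^{p^i}(d)$ and the minimal polynomial $g$ of $\delta|_C$ come into play. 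Pinning down that no such higher-degree element survives is the technical heart of the equality, whereas the inclusion $D\subseteq\mathrm{Nuc}_r$ is immediate once the centrality of $g(t)$ is in hand.
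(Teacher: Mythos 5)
Your proof of the inclusion $D\subseteq{\rm Nuc}_r((D,\delta,d))$ is correct and is essentially the paper's argument: both hinge on the single computation $fa=g(t)a-da=ag(t)-ad=af\in Rf$ for all $a\in D$, i.e.\ on showing that $f$ is semi-invariant because $g(t)$ commutes with $D$ and $d$ is central in $D$. The only difference is how $g(t)a=ag(t)$ is justified: you deduce it from the centrality of $z=g(t)-d_0$ together with $d_0\in F$, whereas the paper invokes the Lam--Leung--Leroy--Matczuk result on monic semi-invariant polynomials of minimal degree; both routes are valid in this setup, and the passage to $D\subseteq{\rm Nuc}$ via the quoted Petit facts on ${\rm Nuc}_l$ and ${\rm Nuc}_m$ is likewise what the paper intends. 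Where you diverge is the equality statement for division rings: the paper's displayed proof stops at the inclusion and tacitly inherits the equalities from \cite[Proposition 3]{P16.0} and the general Petit results, while you attempt a direct verification. Your reduction ${\rm Nuc}_r=\{h: \deg h<\deg f,\ [h,d]=0\}$ (valid because $\deg[h,d]<\deg f$ forces $[h,d]=0$ whenever $[h,d]\in Rf$) is a clean and correct observation, but the final step identifying the low-degree centralizer of $d$ with $D$ is, as you yourself flag, only sketched; it does go through when $[C:F]=p$, since then $C=F(d)$ and the centralizer of $C$ in $D[t;\delta]$ is $D[z]$ with $\deg z=\deg g$. Since the paper does not prove this part either, this is not a defect relative to the paper's own proof, but a fully self-contained argument would need that centralizer computation carried out.
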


\begin{proof}
 Since $g$ is semi-invariant and monic of minimal degree, we have $g(t)a=ag(t)$ for all $a\in D$ (\cite[(2.1), p.~3]{LLLM}, this also holds when $D$ is only simple and not a division algebra, since the polynomial is monic). Since $d\in C\setminus F$, we get $f(t)a=ag(t)-ad=af(t)$ for all $a\in D$ and so in turn $f$ is semi-invariant as well.
Hence the right nucleus of $(K,\delta,d)$ contains $D$  \cite[Proposition 3]{P16.0}.
\end{proof}

Thus every maximal \'etale subalgebra $N$ of $D$  also lies in the nucleus and has dimension $p^en$ as algebra over $F$.

As an algebra over $F$, $N$ is the product of finite dimensional field extensions that are each of the type $N_i/F$, where
we have a tower of field extensions $F\subset C\subset N_i$, such that $N_i/C$ is separable of degree $n$ and $C/F$ purely inseparable of exponent one. This means we can write every $N_i$ as a tensor product $N_i=S_i\otimes_F C $, where $S_i$ is the maximal separable subfield of $N_i/F$ \cite[p. 32]{J96}, and obtain that
$$N=N_1\times \dots \times N_r=(S_1\otimes_F C )\times \dots \times (S_r\otimes_F C )=(S_1\times \dots \times S_r)\otimes_F C$$
 is an \'etale algebra $S_1\times \dots \times S_r$ over $F$ tensored over $F$ with the purely inseparable field extension $C/F$ of exponent one.

\begin{remark}
Let $R=D[t;\delta]$ and define
$V_p(b)=b^p+\delta^{p-1}(b)+*$
 for all $b\in D$, with $*$ a sum of commutators of $b$, $\delta(b),\dots, \delta^{p-2}(b)$, that is
$V_2(b)=b^2+\delta(b),$ $ V_3(b)=b^3+\delta^{2}(b)+[\delta(b),b],  $
 and so on  \cite[p.~18, (1.3.20)]{J96}.  If $D$ is commutative, or if $b\in D$ commutes with all its derivatives, then the sum $*$  is 0 and the the expression simplifies to
$V_p(b)=b^p+\delta^{p-1}(b)$
 \cite[p.~17ff]{J96}. It is easy to see that
$(t-b)^p=t^p-V_p(b)$
 for all $b\in D$ \cite[1.3.19]{J96}.
 
It is well-known that  $V:C\rightarrow F$, $V(a)=V_{p}(a)-a$ is a homomorphism between the additive groups
$C$ and $F$ \cite{J37}.
When $D$ is a division algebra then $(D,\delta,d)$  is a division algebra if and only if $f$ is irreducible, if and only if $d\not=V_p(z)-z$ for all $z\in D$, if and only if $d\not=(t-z)^p-t^p-z$ for all $z\in D$.

Let $F$ have characteristic 3, and $\delta$ have minimum polynomial
$g(t)=t^3-ct\in F[t].$ Then for $f(t)=t^3-ct-d\in C[t;\delta]$, $(D,\delta,d)$
is a  unital algebra over $F$ of dimension $9$, and a division algebra  if and only if
$V_3(z)-cz\not=d$  and $ V_3(z)-zc-d+\delta(c)\not=0$
 for all $z\in D$
 \cite[Theorem 23]{P16.0}.
\end{remark}

Let $F$ be a field of characteristic $p$ and $D$ be a central simple algebra over $F$ of degree $n$.
Let $K/F$ be a purely inseparable extension of exponent one such that $[K:F]=p$. Let $\delta$ be a derivation on $K$ with $F={\rm Const}(\delta)$, such that
$\delta$ is an algebraic derivation of degree $p$ with minimum polynomial $g(t)=t^p-t\in F[t]$
of degree $p$.
Let $\delta$ be the extension of $\delta$ to $D_K$ such that $\delta|_D=0$.
 Then $(K,\delta,d)\otimes_F D\cong (D_K,\delta,d)$ is an algebra of dimension $n^2p^2$ over $F$.

 Moreover, if $D_K=D\otimes_F K$ is a division algebra and $(K,\delta,d)$ is a division algebra over $F$, then
$(K,\delta,d)\otimes_F D\cong (D_K,\delta,d)$ is a division algebra if and only if $f(t)=g(t)-d $ is irreducible in $D_K[t;\delta]$.
  In particular for $g(t)=t^p-t$ this is the case, if and only if $d\not=V_p(z)-z$ for all $z\in D_K$, if and only if $d\not=(t-z)^p-t^p-z$ for all $z\in D_K$  \cite{P16.0}.

%
%

\section{Outlook}

While there are excellent reasons to use the existing definition of $Br^{sa}(F)$ (it is the broadest possible one if we want to use Brauer factor sets), we believe it makes sense to discuss (i) a possible refinement of the semiassociative Brauer monoid to include only simple semiassociative algebras, and (ii) a possible generalization of $Br^{sa}(F)$ that allows us to include nonassociative differential algebras, if the base field $F$ is not perfect and has characteristic $p$:
\\ (i)
The simple semiassociative algebras form a submonoid of $Br^{sa}(F)$ that still contains $Br(F)$ as unique maximal subgroup. If we only consider the simple semiassociative algebras we exclude pathological cases like the associative algebras $(K/F,\sigma, 0)$ or $(D,\sigma, 0)$.
\\ (ii) Suppose that we want to include generalized differential extensions in the definition of the Brauer monoid. Here is one possible way to proceed:
Let $A$ be an $F$-central nonassociative algebra over $F$ of dimension $l^2$, ${\rm char}(F)=p$. We call $A$ a \emph{generalized semisassociative algebra}, if its nucleus contains a tensor product $N=N_1\otimes_F\dots\otimes_F N_s$ of finite field extensions $N_i/F$ such that $dim_F N=l$, with $N_i$  either separable or purely inseparable of exponent one and $N_i/F$ primitive of the kind $N_i=F[x]$ for $x_i^{p}=a\in F$, and if  $A$ is cyclic and faithful as an $N\otimes N$-module.
The root $l$ of the dimension of $A$ is called the \emph{degree} of $A$.

If all $N_i/F$ are separable then $N$ is an \'etale algebra over $F$, so in this case $A$ is semiassociative in the sense of \cite{BHMRV}.

Two generalized semiassociative algebras $A$ and $B$ over $F$ are called \emph{Brauer equivalent}, if there exist skew matrix algebras $M_n(F;c)$ and $M_m(F;c')$ such that $A\otimes_F M_n(F;c) \cong B \otimes_F M_m(F;c') $. This is an equivalence relation, as \cite[Remark 6.9]{BHMRV} still holds. We denote the equivalence class of a generalized semiassociative algebra $A$ by $[A]^{gsa}$ and the monoid of equivalence classes by $Br^{gsa}(F)$.

 Note that every finite purely inseparable field extension of exponent one is a tensor product of primitive extensions $F[x_1]\otimes \cdots\otimes F[x_r]$, where $x_i^{p}-a_i=0$. Also note that if $N$ is purely inseparable of exponent one, then $N\otimes N$ is a truncated polynomial algebra that is isomorphic to $F[G]$ for a finite abelian $p$-group $G$ \cite{C}.
 This means that $N$ is always the tensor product of an  \'etale algebra over $F$ (the ``separable part'') and another algebra (the ``inseparable part''). This inseparable part is either a purely inseparable field extension of $F$ of exponent one, an $F$-algebras $F[G]$,  or a tensor product of an $F$-algebra $F[G]$ and a purely inseparable field extension of $F$ of exponent one.

 Let $A$ be a generalized semiassociative algebra of degree $n$ with $N\subset {\rm Nuc}(A)$, $N=N_1\otimes_F\dots\otimes_F N_s$ of dimension $n$ over $F$, with finite field extensions $N_i/F$ where $N_i$  either separable or purely inseparable and primitive of exponent one. Then $A$ is called \emph{split}, if $N\cong E\otimes_F F[G]$, where $E/F$ is a split \'etale algebra,  and $F[G]$ is a simple truncated polynomial algebra ($G$ an abelian $p$-group). We allow here that $E=F$ or $F[G]=F$. A finite-dimensional field extension $E/F$ \emph{splits} $A$, if $N\otimes_F E\cong S\otimes_E E[G]$ for a suitable abelian $p$ group $G$, and an \'etale algebra $S$ over $E$. We also note that if $N=N_1\otimes_F N_2$ is the tensor product of a separable and a purely inseparable extension of exponent one, then $N/F$ is a finite field extension, as $N_1$ and $N_2$ are linearly disjoint over $F$.

 Let $A$ be a generalized semiassociative algebra with $K ={\rm Nuc}(A)$ a purely inseparable field extension of exponent one. Then a finite-dimensional field extension $E/F$ splits $A$, if $K\otimes_F E$ is a simple truncated polynomial algebra, which is the case if and only if $F\subset K\subset E$ is an intermediate field. In particular, $K$ splits $(K,\delta,d)$. For the nonassociative generalized differential algebra $(D,\delta,d)$ we know that if $K$ is a purely inseparable splitting field of the $C$-central simple algebra $D$, then $(D,\delta,d)\otimes_F K\cong (M_n(K),\delta,d)$ is a generalized semiassociative algebra over $K$ whose
  nucleus contains the $K$-algebra $K\otimes_F K\cong F[G]$, but the algebra is not split, as the dimension of $F[G]$ is too small.

 Moreover, for every central simple algebra $D$ over $F$, we have $D\otimes_F (K,\delta,d)  \cong (D_K,\delta,d) $,
 so if $K$ is a purely inseparable splitting field of $D$, then $D\otimes_F (K,\delta,d)  \cong (M_n(K),\delta,d)$, which is, however, not a split algebra over $F$.

 Let $D$ be a $p$-algebra of degree $p^s$ with maximal separable splitting field $E$ and purely inseparable simple splitting field $L$ of degree $p^f\leq p^e$ or degree $p^s$ (so $D$ is cyclic). Then  $D\otimes_F (K,\delta,d)\cong (D\otimes_F K, \delta,d)$ contains the field extension $E\otimes_F K\subset D\otimes_F K$ of degree $p^{s}p=p^{e+1}$ and the algebra $L\otimes_F K\subset D\otimes_F K$ in its nucleus. Here, $L\otimes_F K$ is either a finite purely inseparable field extension of exponent one, or - if $L=K$ - an algebra $F(G)$. In the later case, we get $D\otimes_F (K,\delta,d)\cong (M_{p^s}(K), \delta,d)$.

Alternatively, we could define  $Br^{gsa}(F)$ as the submonoid of the above generalized one that is generated by $Br^{sa}(F)$ and the algebras
$(D,\delta,d)$ and $(K,\delta,d)$, and call the resulting algebras \emph{generalized semiassociative algebras}.

In either case, we obtain that
$$[(K,\delta,d)]^{gsa}[D]^{gsa}=[(D_K,\delta,d)]^{gsa},$$ in particular
$[(K,\delta,d)]^{gsa}= [(K,\delta,d)]^{gsa}[M_n(F)]^{gsa}= [(M_n(K),\delta,d)]^{gsa}$.
Furthermore, if $D$ is a $p$-algebra of degree $p^s$ and $K$ is a finite-dimensional purely inseparable splitting field of $D$ then
$$[(K,\delta,d)]^{gsa}[D]^{gsa}=[(M_{p^s}(K),\delta,d)]^{gsa}.$$

It would be interesting to explore other relations.
\\\\
{\emph Acknowledgement:} Parts of this paper were completed while the author was a visitor at the University of Ottawa. She acknowledges support from Monica Nevins' NSERC Discovery Grant RGPIN-2020-05020. She would like to thank the Department of Mathematics and Statistics for its hospitality. She would also like the anonymous referee for the careful review and their suggestions that helped improve the paper.


\end{document}